\theoremstyle{plain}
\newtheorem{corollary}{\bf Corollary}
\newtheorem{eexample}{\bf Example}
\newtheorem{lemma}{\bf Lemma}
\newtheorem{remark}{\bf Remark}
\newtheorem{theorem}{\bf Theorem}
\numberwithin{equation}{section}
\begin{document}

\title{A Simons' type formula for spacelike submanifolds in semi-Riemannian warped product}

\author{Guillermo A. Lobos$^1$}
\address{$^2$Departamento de Matem\'atica-UFScar, 13565-905 - São Carlos-SP-Brazil}
\email{lobos@ufscar.br}
\urladdr{http://www.dm.ufscar.br/profs/lobos}

\author{Mynor Melara$^2$}
\address{$^3$Departamento de Matem\'atica-UFScar, 13565-905 - São Carlos-SP-Brazil}
\email{mynormelara@estudante.ufscar.br}
\urladdr{http://www.ufscar.br}

\author{Maria R. B. Santos$^3$}
\address{$^1$Departamento de Matem\'atica-ICE-UFAM, 69080-900 - Manaus-AM-BR}
\email{mrosilenesantos@ufam.edu.br}
\urladdr{http://www.ufam.edu.br}

\begin{abstract}
We determine a Simons' type formula for spacelike submanifolds within a broad class of semi-Riemannian warped products. This formula extends the Simons' type formulas initially introduced by Nomizu and Smyth in 1969 for constant mean curvature hypersurfaces in space forms. Furthermore, it incorporates the 2013 extension by Fetcu and Rosenberg for submanifolds with parallel mean curvature vector field in product spaces. From a global standpoint, we establish that compact spacelike hypersurfaces with parallel mean curvature and non-negative sectional curvature are isoparametric hypersurfaces. This result constitutes a generalization of the Riemannian case within space forms, as demonstrated by Nomizu and Smyth. From a local standpoint,  
we extend analogous results previously established by Asperti, Lobos and Mercuri for pseudo-parallel immersions in Riemannian space forms. As consequences, we prove that any semi-parallel spacelike hypersurface with zero mean curvature in the de Sitter spacetime is totally geodesic. In contrast, there is no semi-parallel spacelike hypersurface with zero mean curvature in the Einstein-de Sitter spacetime. 
\end{abstract}

\keywords{Simons' type formula, semi-Riemannian warped product, parallel mean curvature vector field, pseudo-parallel spacelike submanifold.}


\maketitle


\section{Introduction}
Simons' type formula is an important tool for studying the class of submanifolds with parallel mean curvature vector field. The initial foundation for this was provided by James Simons (\cite{Si}) in his renowned article ``\textit{Minimal varieties in Riemannian manifolds}'' from which we learn that the Laplacian of the squared norm of the second fundamental form of a minimal hypersurface in the unit Euclidean sphere $\mathbb S^{n+1}(1)$ satisfies
\begin{align}\label{simon}\frac{1}{2}\Delta S=nS-S^2+||\nabla A||^2,\end{align} where $S$ and $A$ denote the squared norm of the second fundamental form and the shape operator of hypersurface, respectively. In this work, he also showed that a closed, $n$-dimensional minimal submanifold $M$ in $\mathbb S^{n+k}(1)$ is either totally geodesic, or $S=\dfrac{n}{2-\frac{1}{k}}$, or at some $p\in M^n, S(p)>\dfrac{n}{2-\frac{1}{k}}$.
Two years later, Chern, Do Carmo and Kobayashi in \cite{Chern} 
showed that 
$\mathbb S^m(\sqrt{\frac{m}{n}})\times\mathbb S^{n-m}(\sqrt{\frac{n-m}{n}})$ of $\mathbb S^{n+1}(1)$ and the Veronese surface of $\mathbb S^4$ are
the only compact minimal submanifolds satisfying $S=\dfrac{n}{2-\frac{1}{k}}$.

It is worth mentioning that the above results about minimal submanifolds of $\mathbb S^{n+k}$ also are related to the original version of the conjecture proposed by Chern in \cite{Conjecture}: ``\textit{Let $M^n\subset\mathbb S^{n+1}(1)$ be a closed, minimally immersed hypersurface with constant scalar
curvature $R_M$. Then for each $n$, the set of all the possible values for $R_M$ (or equivalently $S$) is discrete}''. For more detail about this conjecture we recommend to see \cite{Ge,Sche,Tang}.

Starting in 1969, various generalizations of Simons' formula were developed to investigate the classification of submanifolds with either constant mean curvature or parallel mean curvature vector field when immersed in space forms or product spaces. These generalized formulas came to be known as Simons' type formulas.
In the following paragraphs, we will highlight some significant contributions related to Simons' type formulas.

Nomizu and Smyth in \cite{NS} proved that any constant mean curvature hypersurface immersed in a Riemannian manifold with constant curvature $c$ satisfies 
\begin{align}\label{type}
 \frac{1}{2}\Delta S=cn S-S^2-c(tr A)^2+(tr A)(tr A^3)+||\nabla A||^2,
 \end{align}
 where $tr$ denotes the trace function. As an application, they classified the hypersurfaces with non-negative sectional curvature
 immersed in either Euclidean space $\mathbb R^{n+1}$ or in the unit Euclidean sphere $\mathbb S^{n+1}$. In the context of isometric immersion into product space, we would like to highlight recent works by Fetcu and Rosenberg \cite{DH}, Santos \cite{Fabio1}, and Santos and da Silva \cite{Fabio}. In \cite{DH}, the authors derived a Simons' type formula for submanifolds with parallel mean curvature vector in the product space $\mathbb R\times\mathbb Q^n(c)$, where $\mathbb Q^n(c)$ denotes the Riemannian space form with constant curvature $c$, and they subsequently provided characterizations for complete submanifolds within these spaces.
 Conversely, in \cite{Fabio1} the Cheng-Yau operator was employed to establish a Simons' type formula for surfaces
 in $\mathbb R\times\mathbb Q^2(c), c\in\{-1,1\}$ and as a consequence, they obtained that the complete surfaces with constant extrinsic curvature are isometric to either a cylinder $\mathbb R\times \mathbb H^1$ if $c=-1$, or a slice $\{t_0\}\times \mathbb S^2$ if $c=1$, for some $t_0\in\mathbb R$.
 In \cite{Fabio}, the authors presented a Simons' type formula similar to the one in \cite{DH}, for submanifolds with parallel normalized mean curvature (\textit{pnmc}) in the $\mathbb R\times\mathbb S^n$. They subsequently provided an integral inequality for \textit{pnmc} closed submanifolds of $\mathbb R\times\mathbb S^n$.

It is worth noting that there are numerous applications of Simons' type formula in the context of semi-Riemannian manifolds. For example, Chaves and Souza in \cite{Chaves} established Simons' type inequalities for submanifolds in semi-Riemannian space form, thereby obtaining a Lorentzian version of certain results related to umbilical submanifolds. Additionally, Lima at al. \cite{Gomes1} employed a Simons' type formula for spacelike hypersurfaces in Lorentz spaces to investigate complete linear Weingarten spacelike hypersurfaces with two distinct principal curvatures.

The increasing interest in semi-Riemannian geometry among geometers and physicists, particularly in the context of submanifolds immersed in semi-Riemannian warped products, is indeed remarkable, as seen in recent research \cite{alma,Colares,MM,RI}. This diverse family of spaces includes Robertson-Walker spacetime, which is a Lorentzian warped product having as base an open interval $I\subset\mathbb R$ endowed with the metric $-dt^2$ and as fiber a Riemannian manifold endowed with a metric $g$ of constant curvature. Basic examples of such spacetimes are Lorentz-Minkowski, de Sitter, Einstein-de Sitter, and anti-de Sitter spacetimes. 

  In this paper, we study Simons' type formula for spacelike submanifolds immersed in semi-Riemannian warped products of the form
 $\varepsilon I\times_a\mathbb Q^{n+m}_s(c)$, where $\mathbb Q_s^{n+m}(c)$ denotes an $n+m-$dimensional semi-Riemannian space form of index $s$ with constant curvature $c$, $a:I\subset\mathbb R\rightarrow\mathbb R_+$ is a smooth function and $\varepsilon=\pm 1$ (as described in section \ref{sec2}). It is important to note that the recent work by Ribeiro and de Melo \cite{RI} played a significant role in establishing our Simons' type formula. In the following paragraphs, we will provide a brief overview of the main results of our research.
 
In order to state our results, let $f:M^n\rightarrow\varepsilon I\times_a\mathbb Q_s^{n+m}(c)$ be a spacelike isometric immersion, which means the induced metric by $f$ on $M$ from the ambient space $\varepsilon I\times_a\mathbb Q_s^{n+m}$ is positive definite. 
Let $\{e_1,\ldots,e_{n+m+1}\}$ be an adapted frame on $M$, i.e., $\{e_1,\ldots,e_n\}$ is a local orthonormal frame for $TM$ and $\{e_{n+1},\ldots,e_{n+m+1}\}$ is a local frame for $TM^\perp$ such that $\langle e_\gamma,e_\beta\rangle=\varepsilon_\gamma\delta_{\gamma\beta}, \forall\beta,\gamma\in\{n+1,\ldots,n+m+1\}$ and $\varepsilon_\gamma\in\{-1,1\}$. Let $\frac{\partial}{\partial t}$ be a vector field tangent to the first factor and we denote by $T$ and $\xi$ its tangent and normal parts to $M$, respectively. Let $\alpha$ be the second fundamental form on $M$ and denote $h^{\beta}_{ij}=\langle \alpha(e_i,e_j), e_\beta\rangle, i,j=1,\ldots,n$ (see section  \ref{sec2}). Then, as our first main result, Simons' type formula for spacelike submanifolds in $\varepsilon I\times_a\mathbb Q^{n+m}_s(c)$ can be stated as follows.  
 
\begin{theorem}\label{T1}
    Let $f:M^n\rightarrow\varepsilon I\times_a\mathbb Q_s^{n+m}(c)$ be a spacelike isometric immersion. Then,  
     \begin{align*}
\dfrac{1}{2}\Delta|\alpha|^2=& \sum_\beta\sum_{i,j,k}h^\beta_{ij}h^\beta_{kkji}+|\nabla^\perp\alpha|^2+\sum_\beta\Big\langle \nabla\Big(\dfrac{a''}{a}-\varepsilon b\Big),(nA_\beta-(trA_\beta)I)(T)\Big\rangle\langle\xi,e_\beta\rangle\\ 
 &-\sum_\beta b'\Big\langle (nA_\beta-(trA_\beta)I)(T),T \Big\rangle\langle\xi,e_\beta\rangle+\sum_{\beta}\Big(\dfrac{a''}{a}-\varepsilon b\Big)\Big(ntr(A_\xi A_\beta)-(trA_\xi)(trA_\beta)\Big)\Big)\langle\xi,e_\beta\rangle\\
 &+\sum_\beta\Big(\dfrac{a''}{a}-\varepsilon b\Big)\Big(3\langle A_\beta T,T\rangle trA_\beta-2n||A_\beta T||^2-||T||^2trA_\beta^2\Big)\\
 &+\sum_{\beta}b((trA_\beta)^2-n(trA_\beta^2))+\sum_{\beta,\gamma}\varepsilon_\gamma\Big((trA_\gamma)tr(A_\beta^2A_\gamma)+tr([A_\gamma,A_\beta]^2)-(tr(A_\beta A_\gamma))^2\Big),
 \end{align*}
 where $b=\dfrac{\varepsilon(a')^2-c}{a^2}$ and $A_\beta$ is the shape operator of $M$ with respect to $e_\beta\in TM^\perp$.
    \end{theorem}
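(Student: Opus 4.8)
The plan is to follow the classical Simons scheme---Bochner identity, then the Codazzi and Ricci identities, then substitution of the explicit ambient curvature tensor---adapted to the warped product geometry of $\varepsilon I\times_a\mathbb Q_s^{n+m}(c)$. Concretely, I would start from the Bochner identity
\[
\tfrac{1}{2}\Delta|\alpha|^2=|\nabla^\perp\alpha|^2+\langle\bar\Delta\alpha,\alpha\rangle,
\]
where $\bar\nabla$ is the Van der Waerden--Bortolotti connection (the Levi-Civita connection of $M$ on the form slots together with the normal connection $\nabla^\perp$ on the value slot) and $\bar\Delta=\sum_k\bigl(\bar\nabla_{e_k}\bar\nabla_{e_k}-\bar\nabla_{\nabla_{e_k}e_k}\bigr)$ is its rough Laplacian, so that $\langle\bar\Delta\alpha,\alpha\rangle=\sum_\beta\sum_{i,j}h^\beta_{ij}(\bar\Delta\alpha)^\beta_{ij}$. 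The whole problem then reduces to computing $\bar\Delta\alpha$.

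\emph{The abstract Simons identity.} Fixing a point and a local orthonormal frame geodesic there, I would contract the Codazzi equation $(\bar R(e_k,X)Y)^\perp=(\bar\nabla_{e_k}\alpha)(X,Y)-(\bar\nabla_X\alpha)(e_k,Y)$ over $k$, obtaining $\sum_k(\bar\nabla_{e_k}\alpha)(e_k,Y)=\bar\nabla_Y(tr\,\alpha)+\sum_k(\bar R(e_k,Y)e_k)^\perp$; then differentiate the relation $(\bar\nabla_{e_k}\alpha)(X,Y)=(\bar\nabla_X\alpha)(e_k,Y)+(\bar R(e_k,X)Y)^\perp$ once more along $e_k$, sum over $k$, and commute the iterated covariant derivatives by the Ricci identities. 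This yields
\[
\bar\Delta\alpha(X,Y)=\bar\nabla^2_{X,Y}(tr\,\alpha)+\bigl(\text{terms in }\bar R\text{ and }\bar\nabla\bar R\bigr)+\bigl(\text{terms quadratic in }\alpha\bigr),
\]
the last block coming from $R^M$ and $R^\perp$ acting on $\alpha$ after applying the Gauss and Ricci equations. Pairing with $\alpha$ in the adapted frame, the Hessian-of-mean-curvature term is exactly $\sum_\beta\sum_{i,j,k}h^\beta_{ij}h^\beta_{kkji}$, and the quadratic block becomes $\sum_{\beta,\gamma}\varepsilon_\gamma\bigl((trA_\gamma)tr(A_\beta^2A_\gamma)+tr([A_\gamma,A_\beta]^2)-(tr(A_\beta A_\gamma))^2\bigr)$, the signs $\varepsilon_\gamma$ recording the causal character of the normal directions. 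This part is valid over any ambient manifold and is where the identities recalled in the Introduction (Fetcu--Rosenberg, Ribeiro--de Melo) fit in.

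\emph{Plugging in the warped-product curvature.} Next I would substitute the curvature tensor $\bar R$ and its covariant derivative $\bar\nabla\bar R$ for $\bar M=\varepsilon I\times_a\mathbb Q_s^{n+m}(c)$, using the O'Neill-type warped-product formulas specialized to a one-dimensional flat base $(\varepsilon I,\varepsilon\,dt^2)$ and a fiber of constant curvature $c$. Writing $\partial/\partial t=T+\xi$, the tensor $\bar R$ splits into a constant-curvature part with coefficient $b$ in the directions orthogonal to $\partial/\partial t$ and a correction with coefficient $\dfrac{a''}{a}-\varepsilon b$ supported on the $\partial/\partial t$-direction (this last one coming from the mixed term $\bar R(V,\partial/\partial t)\partial/\partial t=\frac{a''}{a}V$). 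Contracting over the frame, the first part produces $\sum_\beta b\bigl((trA_\beta)^2-n\,trA_\beta^2\bigr)$---the analogue of the space-form terms in \eqref{type}---while the correction produces the two blocks carrying $\dfrac{a''}{a}-\varepsilon b$: the term $\bigl(n\,tr(A_\xi A_\beta)-(trA_\xi)(trA_\beta)\bigr)\langle\xi,e_\beta\rangle$ when the free curvature slots meet $\xi$ and $e_\beta$, and the term $3\langle A_\beta T,T\rangle trA_\beta-2n\|A_\beta T\|^2-\|T\|^2 trA_\beta^2$ when they meet $T$. Finally, differentiating these curvature expressions along $M$---using the warped-product connection identity $\bar\nabla_X(\partial/\partial t)=\frac{a'}{a}\bigl(X-\varepsilon\langle X,\partial/\partial t\rangle\,\partial/\partial t\bigr)$ to compute $\nabla T$ and $\nabla\xi$---produces the remaining first-order blocks, those with $\nabla\bigl(\frac{a''}{a}-\varepsilon b\bigr)$ and with $b'=\partial_t b$.

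Collecting all of this and matching it term by term against the six groups in the statement finishes the proof. I expect the main obstacle to be precisely the bookkeeping of the last two steps: correctly specializing the warped-product curvature and connection formulas to this index-$s$ semi-Riemannian setting (tracking every factor of $\varepsilon$ and every $a^2$ coming from the fiber metric), computing $\bar\nabla\bar R$, and carrying the decomposition $\partial/\partial t=T+\xi$ consistently through all the frame contractions while keeping the signs $\varepsilon_\gamma$ coherent.
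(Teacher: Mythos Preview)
Your plan is correct and follows the same classical Simons scheme as the paper, but you organize the computation differently. You propose to derive the abstract Simons identity first---keeping the ambient curvature $\bar R$ and its derivative $\bar\nabla\bar R$ symbolic---and only at the end substitute the explicit warped-product curvature formula~\eqref{Rbarra} and compute $\bar\nabla\bar R$. The paper instead specializes from the very first step: it uses the warped-product Codazzi equation (which already carries the correction $B(\langle T,X\rangle\langle Y,Z\rangle-\langle T,Y\rangle\langle X,Z\rangle)\langle\xi,\eta\rangle$) at every index swap, and then differentiates those explicit correction terms via the identities~\eqref{id1}--\eqref{id2} for $\nabla_XT$ and $\nabla^\perp_X\xi$, so that $\bar\nabla\bar R$ never appears as such---its role is played by the derivatives $e_k(B)$ together with the Weingarten pieces coming from $\nabla T$ and $\nabla^\perp\xi$. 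Your route is conceptually cleaner and makes the dependence on the ambient geometry transparent in a single substitution step; the paper's route avoids ever assembling $\bar\nabla\bar R$ and keeps the whole calculation in terms of the scalar functions $b$, $B$, $a'/a$ and the shape operators, at the price of a longer chain of explicit frame manipulations. Both arrive at the same six blocks of terms, and the bookkeeping hazards you flag (the $\varepsilon$, $\varepsilon_\gamma$ and $a^2$ factors, and the $T$/$\xi$ split) are exactly the ones the paper's computation has to manage as well.
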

  
\begin{remark} {~{\upshape{
\begin{itemize}
    \item [1.] Theorem \ref{T1} generalizes the Simons' type formulas proved in \cite{DH, NS}. 
    \item [2.] When $\varepsilon I\times_a\mathbb Q_s^{n}(c)$ has constant curvature, it follows from Theorem \ref{T1}  that any compact spacelike hypersurface in $\varepsilon I\times_a\mathbb Q_s^{n}(c)$ with parallel mean curvature vector field and non-negative sectional curvature is isoparametric, i.e., the eigenvalues of the shape operator are constants (see Corollary \ref{co3}). This result generalizes the Riemannian case as obtained by Nomizu and Smyth in \cite{NS}.
\end{itemize}}}}  
\end{remark}

As a further application of Theorem \ref{T1}, we will extend the result on minimal pseudo-parallel immersion in space forms
 originally obtained by Asperti, Lobos and Mercuri in \cite{AS}.
 
   
\begin{theorem}\label{c1}
   Let $f:M^n\rightarrow \varepsilon I\times_a\mathbb Q_s^{n+m}(c)$ be a $\psi$-pseudo-parallel spacelike immersion and let $\Vec{H}$ be the mean curvature vector field of $f$ such that $\Vec{H}(p)=0, p\in M$. 
   \begin{enumerate}
       \item For $s=0$ and $\varepsilon=1$, if $\frac{a''}{a}-\frac{(a')^2}{a^2}+\frac{c}{a^2}\geq0$ and
   \begin{align*}
   \psi(p)\geq -\frac{1}{n}\Big(\Big(\frac{a''}{a}-\frac{(a')^2}{a^2}+\frac{c}{a^2}\Big)||T||^2+n\Big(\frac{(a')^2- c}{a^2}\Big)\Big),\end{align*} 
   then $p$ is a geodesic point.
   \item For $s=0$ and $\varepsilon=-1$ (Robertson-Walker spacetime) or $0<s=m+\frac{1+\varepsilon}{2}$, $\varepsilon\in\{-1,1\}$, if $\frac{a''}{a}-\frac{(a')^2}{a^2}+\frac{\varepsilon c}{a^2}\leq0$ and
   \begin{align*}
   \psi(p)\leq -\frac{1}{n}\Big(\Big(\frac{a''}{a}-\frac{(a')^2}{a^2}+\frac{\varepsilon c}{a^2}\Big)||T||^2+n\Big(\frac{\varepsilon(a')^2- c}{a^2}\Big)\Big),\end{align*} then $p$ is a geodesic point.
   \end{enumerate} 
\end{theorem}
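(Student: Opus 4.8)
The plan is to distill from the computation behind Theorem~\ref{T1} a purely \emph{pointwise} identity for $\psi$-pseudo-parallel immersions, and then run a Simons-type pinching argument at the single point $p$. Recall that $f$ being $\psi$-pseudo-parallel means that the curvature operator $\bar R$ of the van der Waerden--Bortolotti connection satisfies $\bar R(X,Y)\cdot\alpha=\psi\,(X\wedge Y)\cdot\alpha$, where both sides act as derivations on the $TM^\perp$-valued symmetric tensor $\alpha$ and $(X\wedge Y)Z=\langle X,Z\rangle Y-\langle Y,Z\rangle X$. The key observation is that, in the derivation of Theorem~\ref{T1}, the contribution $\sum_{i,j,k}\langle\alpha(e_i,e_j),(\bar R(e_k,e_i)\cdot\alpha)(e_k,e_j)\rangle$ is exactly what the Gauss and Ricci equations, together with the explicit curvature tensor of $\varepsilon I\times_a\mathbb Q_s^{n+m}(c)$ (cf.\ \cite{RI}), turn into the three lines of Theorem~\ref{T1} carrying the factors $\tfrac{a''}{a}-\varepsilon b$ and $b$ and the last, algebraic, line. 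I would re-run that step but invoke the pseudo-parallel hypothesis \emph{before} expanding, using the elementary contraction $\sum_{i,j,k}\langle\alpha(e_i,e_j),((e_k\wedge e_i)\cdot\alpha)(e_k,e_j)\rangle=n|\alpha|^2-n^2|\Vec{H}|^2$, to get the identity — valid at every point of an arbitrary $\psi$-pseudo-parallel spacelike $f$ —
\begin{equation*}
\Psi_a(\alpha,T,\xi)+\sum_{\beta,\gamma}\varepsilon_\gamma\Big((\tr A_\gamma)\,\tr(A_\beta^2A_\gamma)+\tr([A_\gamma,A_\beta]^2)-(\tr(A_\beta A_\gamma))^2\Big)=n\psi\big(|\alpha|^2-n|\Vec{H}|^2\big),
\end{equation*}
where $\Psi_a(\alpha,T,\xi)$ denotes the sum of the $\tfrac{a''}{a}-\varepsilon b$ and $b$ terms of Theorem~\ref{T1}.

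Next I would specialize to $p$. Since $\Vec{H}(p)=0$, every $\tr A_\beta$ (hence $\tr A_\xi$) vanishes at $p$, so the right-hand side becomes $n\psi(p)\,|\alpha(p)|^2$ and several terms on the left drop. Here the index hypotheses enter: in case~(1) the normal bundle $TM^\perp$ is positive definite and in case~(2) it is negative definite (this is what $s=0$, $\varepsilon=1$, resp.\ $s=0$, $\varepsilon=-1$ for hypersurfaces and $0<s=m+\tfrac{1+\varepsilon}{2}$ in general, mean geometrically), so all $\varepsilon_\gamma$ equal a common sign $\epsilon_0\in\{1,-1\}$. Using that each $A_\beta$ is symmetric and each $[A_\gamma,A_\beta]$ skew-symmetric, the algebraic line at $p$ equals $-\epsilon_0 N$ with $N:=\sum_{\beta,\gamma}\big(\|[A_\gamma,A_\beta]\|^2+(\tr(A_\beta A_\gamma))^2\big)\geq0$, while $|\alpha(p)|^2=\epsilon_0 S$ with $S:=\sum_\beta\tr A_\beta^2\geq0$ and $S=0$ precisely when $p$ is a geodesic point. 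Finally I would simplify $\Psi_a(\alpha,T,\xi)(p)$ by inserting $\tr A_\beta=0$ and the warped-product Gauss--Weingarten relations for $T=(\partial_t)^\top$ and $\xi=(\partial_t)^\perp$ (which express $\nabla T$ and $\nabla^\perp\xi$ through $a'/a$, $A_\xi$ and $\alpha$), turning $\Psi_a$ at $p$ into $\big(\tfrac{a''}{a}-\varepsilon b\big)(p)$ times a quadratic combination of $S$, $\|A_\beta T\|^2$ and $\tr A_\xi^2$, plus $n\,b(p)\,S$.

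Putting the pieces together, the identity at $p$ should reduce to a relation in which one side is manifestly $\geq0$ — a sum of $N$, a multiple of $S$, and a multiple of $\kappa:=\tfrac{a''}{a}-\tfrac{(a')^2}{a^2}+\tfrac{\varepsilon c}{a^2}=\tfrac{a''}{a}-\varepsilon b$ — while, by the sign hypotheses on $\kappa$ and on $\psi(p)$ (recall $\tfrac{\varepsilon(a')^2-c}{a^2}$ equals the $b$ appearing in Theorem~\ref{T1}, so the hypothesis is exactly that $n\psi(p)+\kappa\|T\|^2+n\tfrac{\varepsilon(a')^2-c}{a^2}$ have a prescribed sign), the other side is manifestly of the opposite sign, the roles of the two signs being interchanged between cases~(1) and~(2). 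An equality between a non-negative and a non-positive quantity forces both to vanish; in particular $N=0$, which gives $\tr A_\beta^2=0$ for all $\beta$, i.e.\ $\alpha(p)=0$, so $p$ is a geodesic point. The hard part — the real content beyond Theorem~\ref{T1} — is carrying out this last reduction honestly: one must verify that, after substituting $\Vec{H}(p)=0$ and the Gauss--Weingarten identities for $T$ and $\xi$, the terms in $\|A_\beta T\|^2$ and $\tr A_\xi^2$ really do recombine into a single $\kappa$-multiple of definite sign, and that the coefficient multiplying $S$ is exactly $n\psi(p)+\kappa\|T\|^2+n\tfrac{\varepsilon(a')^2-c}{a^2}$, all the while tracking correctly the signature data $\varepsilon$, the index $s$, and the common normal sign $\epsilon_0$.
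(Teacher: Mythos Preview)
Your strategy is correct and is exactly the paper's: the paper computes $\tfrac12\Delta|\alpha|^2$ a second time, using Codazzi twice together with the pseudo-parallel commutation $h^\beta_{ijkl}=h^\beta_{ijlk}-\psi(\delta_{ki}h^\beta_{lj}-\delta_{li}h^\beta_{kj}+\delta_{kj}h^\beta_{il}-\delta_{lj}h^\beta_{ik})$ in place of the curvature commutation, and subtracts the result from Theorem~\ref{T1}. Since every term arising from the Codazzi corrections (the $\nabla B$, $b'$ and $h^\beta_{kkji}$ pieces, the $\langle\xi,e_\beta\rangle$-terms, and part of the $\|A_\beta T\|^2$ contribution) appears identically in both computations, what survives is precisely your equality between the Gauss--Ricci and the pseudo-parallel expansions of $\sum_{i,j,k} h^\beta_{ij}\langle(\bar R(e_k,e_i)\cdot\alpha)(e_k,e_j),e_\beta\rangle$.

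One correction, however, will spare you the ``hard part'' you flag at the end. Your $\Psi_a$ is \emph{not} ``the $B$- and $b$-lines of Theorem~\ref{T1}'': those lines mix Gauss--Ricci contributions with Codazzi corrections, and only the former survive the subtraction. The paper's actual pointwise identity reads
\[
\sum_\beta B\big(n\|A_\beta T\|^2-2\langle A_\beta T,T\rangle\tr A_\beta+\|T\|^2\tr A_\beta^2\big)+\sum_\beta(\psi+b)\big(n\tr A_\beta^2-(\tr A_\beta)^2\big)-\sum_{\beta,\gamma}\varepsilon_\gamma\Big((\tr A_\gamma)\tr(A_\beta^2A_\gamma)+\tr([A_\beta,A_\gamma]^2)-(\tr A_\beta A_\gamma)^2\Big)=0,
\]
with $B=\tfrac{a''}{a}-\varepsilon b$. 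In particular there is no $\tr A_\xi^2$ term and the Gauss--Weingarten relations for $T,\xi$ play no role: setting $\tr A_\beta=0$ at $p$ collapses this directly to
\[
nB\sum_\beta\|A_\beta T\|^2+\big(B\|T\|^2+n(\psi+b)\big)\sum_\beta\tr A_\beta^2+\epsilon_0\sum_{\beta,\gamma}\big(\|[A_\beta,A_\gamma]\|^2+(\tr A_\beta A_\gamma)^2\big)=0,
\]
and the three summands share a common sign under the stated hypotheses (with $\epsilon_0$ your common normal sign), so each vanishes and $\alpha(p)=0$.
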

\begin{remark} {~{\upshape{
\begin{itemize}
\item[1.]  Pseudo-parallel immersions were initially introduced in \cite{AS} as natural extension of semi-parallel submanifolds and as the extrinsic analogue of pseudo-symmetric manifolds introduced by Deszcz in \cite{D}.
    \item [2.]  The assumptions in Theorem \ref{c1} are indeed necessary, as discussed in section \ref{parallel}. 
    \item [3.] When the ambient manifold is the de Sitter spacetime, it can be deduced from Theorem \ref{c1} that any semi-parallel spacelike hypersurface ($\psi=0$) with $\Vec{H}(p)=0$ are totally geodesic at $p$ (see Corollary \ref{deSitter}). From a global point of view, T. Ishihara in \cite{TI} proved that any complete maximal submanifolds in semi-Riemannian manifold $L^{n+s}_s(c)$ of constant curvature $c\geq0$ is totally geodesic. Years later, Alias and Romero in \cite{AL} established that the only $n$-dimensional complete maximal submanifolds in $\mathbb S^{n+s}_s$ are totally geodesic ones.
    
    
    \item [4.] Another consequence of Theorem \ref{c1} refers to the nonexistence of semi-parallel spacelike hypersurface with zero mean curvature in the Einstein-de Sitter spacetime. This outcome is a partial extension  of a result established by Aledo et. al \cite{Aledo}, which concerns the nonexistence of totally geodesic spacelike hypersurfaces in the Einstein-de Sitter spacetime.
    \item [5.] When $f$ has codimension $2$, we obtain that normal bundle of $f$ is flat if either the assumptions of Theorem \ref{c1} hold or $\Vec H(p)\neq0$ for some $p\in M$, where $\Vec{H}$ is the mean curvature vector field of $f$.  This generalizes a similar result for pseudo-parallel immersions of codimension $2$ in space forms by Asperti et al. \cite{AS}. 
\end{itemize}}}}   
\end{remark}

This paper is organized as follows: In section \ref{sec2} we present the basic concepts of theory of submanifolds in semi-Riemannian warped product. In section \ref{proof} we prove  Theorem \ref{T1} and also present its direct applications. In section \ref{parallel} we discuss about pseudo-parallel immersions and we demonstrate Theorem \ref{c1} and its consequences. 

\section{Preliminaries}\label{sec2}
Let $(\mathbb Q_s^{n+m}(c),g)$ be an $n+m-$dimensional semi-Riemannian space
form of index $s$, constant curvature $c$ and metric $g$. Namely, $\mathbb Q_s^{n+m}(c)$ may be considered up to isometries, as the semi-Riemannian hyperbolic space $\mathbb H_s^{n+m}(c)$ if $c<0$, the semi-Riemannian Euclidean space $\mathbb R_s^{n+m}$ if $c=0$ and the semi-Riemannian sphere $\mathbb S_s^{n+m}(c)$ if $c>0$. For $s=0$, we denote $\mathbb Q^{n+m}_0(c)=\mathbb Q^{n+m}(c)$ which is a Riemannian space form. For $s=1$, the manifolds $\mathbb R^{n+m}_1, \mathbb S^{n+m}_1(c)$ and $\mathbb H^{n+m}_1(c)$ are called  Lorentz-Minkowski, de Sitter and anti-de Sitter spacetime in the general relativity.

We consider $\varepsilon I\times_a\mathbb Q_s^{n+m}(c)$ the semi-Riemannian warped product equipped with a warped product metric \[\langle\cdot,\cdot\rangle=\varepsilon \pi^\ast_I(dt^2)+a(\pi_I)^2\pi^\ast_{\mathbb Q}g,\] where $\pi_I, \pi_{\mathbb Q}$ denote the projections onto $I$ and $\mathbb Q$, respectively, $a:I\subset\mathbb R\rightarrow\mathbb R_+$ is a smooth function and $\varepsilon=\pm 1$. 
In \cite{RI}, we can find that the curvature tensor $\overline{R}$ of $\varepsilon I\times_a\mathbb Q_s^{n+m}(c)$ is given by
\begin{align}\label{Rbarra}\nonumber
    \overline R(X,Y,Z,W)=&\Big(\varepsilon \frac{(a')^2}{a^2}-\frac{c}{a^2}\Big)\Big(\langle X,Z\rangle\langle Y,W\rangle-\langle Y,Z\rangle\langle X,W\rangle\Big)\\ \nonumber
    &+\Big(\frac{a''}{a}-\frac{(a')^2}{a^2}+\varepsilon\frac{c}{a^2}\Big)\Big(\langle X,Z\rangle\langle Y,\partial t\rangle\langle W,\partial t\rangle-\langle Y,Z\rangle\langle X,\partial t\rangle\langle W,\partial t\rangle\\
   & -\langle X,W\rangle\langle Y,\partial t\rangle\langle Z,\partial t\rangle
    +\langle Y,W\rangle\langle X,\partial t\rangle\langle Z,\partial t\rangle\Big), 
\end{align}
for each $X, Y, Z, W$ on $\varepsilon I\times_a\mathbb Q_s^{n+m}(c).$

One check easily from \eqref{Rbarra} that $\varepsilon I\times_a\mathbb Q_s^{n+m}(c)$ has constant curvature $\kappa$ if and only if 
\begin{align}\label{constant}
\frac{a''}{a}=\frac{(a')^2}{a^2}-\frac{ \varepsilon c}{a^2}=-\varepsilon\kappa.    
\end{align}

Let $f:M^n\rightarrow \varepsilon I\times_a\mathbb Q_s^{n+m}(c)$ be an isometric immersion of a semi-Riemannian manifold $M^n$ in $\varepsilon I\times_a\mathbb Q_s^{n+m}(c)$. In the sequence, every geometric object is related to the immersion $f$.  When there is no confusion regarding which immersion we are considering, we will refer to the submanifold $M$ instead of the immersion $f$. 
 
For any $X, Y\in TM$, the Gauss and Weingarten formulas are given by 
\[
\overline\nabla_XY=\nabla_XY+\alpha(X,Y)\quad\mbox{and}\quad \overline\nabla_X\eta=-A_\eta X +\nabla^\perp_X\eta,
\]
where $\overline\nabla$ and $\nabla$ denote the Levi-Civita connection on $\varepsilon I\times_a\mathbb Q_s^{n+m}(c)$ and $M$, respectively, $\alpha\in Hom^2(TM,TM;TM^\perp)$ is the second fundamental form of $M$ and $A_\eta\in Hom(TM;TM)$ is the shape operator associated to normal direction $\eta\in TM^\perp$ such that $\langle A_\eta(X), Y\rangle=\langle \alpha(X,Y),\eta\rangle$. 
Moreover, the mean curvature vector field of $M$ is defined by $\Vec{H}=\dfrac{1}{n}tr(\alpha)$, where $tr$ denotes the trace function.  

We denote by $\pi_M:M\rightarrow I$ the restriction to $M$ of the natural projection $\pi_I$. So, $\partial t=T+\xi,$ where $T=\varepsilon \mbox{grad}(\pi_M)$ and $\xi\in TM^\perp$, in particular, we have that $\varepsilon=\langle\partial t,\partial t\rangle=\langle T,T\rangle+\langle\xi,\xi\rangle$.
In \cite{RI}, it was proved the following identities:
\begin{align}
    \overline\nabla_X\partial t=&\frac{a'}{a}(X-\varepsilon\langle X,\partial t\rangle\partial t)\\ \label{id1}
    \nabla_XT=&\frac{a'}{a}(X-\varepsilon\langle X,T\rangle T)+A_\xi X\\ \label{id2}
    \nabla^\perp_X\xi=&-\varepsilon\frac{a'}{a}\langle X,T\rangle\xi-\alpha(X,T).
\end{align}

We now denote $R$ and $R^\perp$ the curvature tensors of the tangent and normal bundles $TM$ and $TM^\perp$. Then, for each $X, Y, Z, W\in TM$ and $\eta\in TM^\perp$, the fundamental equations are given by
\begin{enumerate}
\item Gauss equation: \begin{align*}
        R(X,Y,Z,W)=&\Big(\frac{\varepsilon(a')^2 -c}{a^2}\Big)\Big(\langle X,Z\rangle\langle Y,W\rangle-\langle Y,Z\rangle\langle X,W\rangle\Big)\\
        &+\Big(\frac{a''}{a}+\frac{\varepsilon c-(a')^2}{a^2}\Big)\Big(\langle X,Z\rangle\langle Y,T\rangle\langle W,T\rangle-\langle Y,Z\rangle\langle X,T\rangle\langle W,T\rangle\\&+\langle Y,W\rangle\langle X,T\rangle\langle Z,T\rangle-\langle X,W\rangle\langle Y,T\rangle\langle Z,T\rangle\Big)\\
        &-\langle \alpha(X,Z),\alpha(Y,W)\rangle+\langle\alpha(X,W),\alpha(Y,Z)\rangle,
    \end{align*}
\item Codazzi equation:
\begin{align*}
   (\nabla^\perp_Y\alpha)(X,Z,\eta)&=(\nabla^\perp_X\alpha)(Y,Z,\eta)+\Big(\frac{a''}{a}+\frac{\varepsilon c-(a')^2}{a^2}\Big)\Big(\langle T,X\rangle\langle Y,Z\rangle-\langle T,Y\rangle\langle X,Z\rangle\Big)\langle\xi,\eta\rangle,
\end{align*}
\item Ricci equation:
\begin{align*}
    R^\perp(X,Y)\eta=\alpha(A_\eta Y,X)-\alpha(A_\eta X,Y).
\end{align*}
\end{enumerate}

Throughout this paper, we will consider $f:M\rightarrow\varepsilon I\times_a\mathbb Q_s^{n+m}(c)$ as a spacelike isometric immersion. 
Additionally, we refer to $f$ as \textit{extremal} (maximal or minimal) if its mean curvature vector field vanishes identically. 

\section{Proof of Theorem \ref{T1}}\label{proof}

Let $p$ be an arbitrary point in $M$ and we consider around it an adapted frame $\{e_1,\ldots,e_{n+m+1}\}$ on $M$, i.e., $\{e_1,\ldots,e_n\}$ is a local orthonormal frame for $TM$ such that $(\nabla e_i)(p)=0$, and $\{e_{n+1},\ldots,e_{n+m+1}\}$ is a local frame for $TM^\perp$ such that $\langle e_\gamma,e_\beta\rangle=\varepsilon_\gamma\delta_{\gamma\beta}, \forall\beta,\gamma\in\{n+1,\ldots,n+m+1\}$ and $\varepsilon_{\gamma}\in\{-1,1\}$. In what follows, we shall use the following convention on the ranges of indices:
\begin{align*}
    1\leq i,j,k,l\leq n\quad\mbox{and}\quad n+1\leq\beta,\gamma\leq n+m+1.
\end{align*}

We denote by $\nabla_je_i=\nabla_{e_j}e_i$,
$
h^\beta_{ij}=\langle\alpha(e_i,e_j),e_\beta\rangle$, $h^\beta_{ijk}=\langle(\nabla^\perp_k\alpha)(e_i,e_j), e_\beta\rangle$ and $h^\beta_{ijkl}=\langle(\nabla^\perp_l\nabla_k^\perp\alpha)(e_i,e_j),e_\beta\rangle$. 
Thus, we define the squared norm of the second fundamental form by $|\alpha|^2=\displaystyle\sum_{\beta}\sum_{i,j}(\varepsilon_\beta h_{ij}^\beta)^2=\displaystyle\sum_{\beta}\sum_{i,j}(h_{ij}^\beta)^2$, as well as  $|\nabla^\perp\alpha|^2=\displaystyle\sum_\beta\sum_{i,j,k}(h^\beta_{ijk})^2$. For simplicity, we shall denote by $b=\frac{\varepsilon(a')^2-c}{a^2}$ and $B=\frac{a''}{a}-\varepsilon b$.

This way the Laplacian of the squared norm of the second fundamental form of $M$ is given by
\begin{align}\label{soma}\nonumber
    \dfrac{1}{2}\Delta|\alpha|^2&=\sum_\beta\sum_{i,j,k}\Big((h^\beta_{ijk})^2+h^\beta_{ij}h^\beta_{ijkk}\Big)\\
    &=|\nabla^\perp\alpha|^2+\sum_\beta\sum_{i,j,k}h^\beta_{ij}h^\beta_{ijkk}.
 \end{align}
We then compute the second term of \eqref{soma} as follows.
\begin{align}\label{nabla}
  (\nabla^\perp_k\nabla^\perp_k\alpha)(e_i,e_j)=&\nabla^\perp_k\Big((\nabla^\perp_k\alpha)(e_i,e_j)\Big)\,\,\mbox{at}\,\, p.  
\end{align}
From Codazzi equation we have that
\begin{align*}
    \nabla^\perp_k\Big((\nabla^\perp_k\alpha)(e_i,e_j)\Big)=&\nabla^\perp_k\Big((\nabla^\perp_i\alpha)(e_k,e_j)\Big)+\nabla^\perp_k\Big(B\Big(\langle T,e_i\rangle\delta_{kj}-\langle T,e_k\rangle\delta_{ij}\Big)\xi\Big).
\end{align*}
Using \eqref{id1} and \eqref{id2} in the last equality, we get 
\begin{align*}
\nabla^\perp_k\Big((\nabla^\perp_k\alpha)(e_i,e_j)\Big)=&\nabla^\perp_k\Big((\nabla^\perp_i\alpha)(e_k,e_j)\Big)+e_k(B)(\langle T,e_i\rangle\delta_{kj}-\langle T,e_k\rangle\delta_{ij})\xi\\
&+\frac{a'}{a}B\Big((\delta_{ki}\delta_{kj}-\delta_{kk}\delta_{ij})+2\varepsilon\Big(\langle T,e_k\rangle^2\delta_{ij}-\langle T,e_i\rangle\langle T,e_k\rangle\delta_{kj}\Big)\Big)\xi\\
    &+B\Big(\langle A_{\xi}e_k,e_i\rangle\delta_{kj}-\langle A_\xi e_k,e_k\rangle\delta_{ij}\Big)\xi-B\Big(\langle T,e_i\rangle\delta_{kj}-\langle T,e_k\rangle\delta_{ij}\Big)\alpha(e_k,T)\,\,\mbox{at}\,\, p.
\end{align*}
Since,
\begin{align*}
\nabla^\perp_k\Big((\nabla^\perp_i\alpha)(e_k,e_j)\Big)=&\nabla^\perp_k\Big((\nabla^\perp_i\alpha)(e_j,e_k)\Big)\\
=&\nabla^\perp_i\nabla_k^\perp\alpha(e_j,e_k)
+R^\perp(e_k,e_i)\alpha(e_j,e_k)+\nabla_{[e_k,e_i]}\alpha(e_j,e_k)\\
&-(\nabla^\perp_k\alpha)(\nabla_ie_j,e_k)-\alpha(\nabla_k\nabla_ie_j,e_k)-\alpha(\nabla_ie_j,\nabla_ke_k)\\
&-(\nabla^\perp_k\alpha)(e_j,\nabla_ie_k)-\alpha(\nabla_ke_j,\nabla_ie_k)-\alpha(e_j,\nabla_k\nabla_ie_k),
\end{align*}
it follows that, 
\begin{align*}
\nabla^\perp_k\Big((\nabla^\perp_i\alpha)(e_k,e_j)\Big)&=\nabla^\perp_i\nabla_k^\perp\alpha(e_j,e_k)
+R^\perp(e_k,e_i)\alpha(e_j,e_k)-\alpha(\nabla_k\nabla_ie_j,e_k)-
\alpha(e_j,\nabla_k\nabla_ie_k)\,\,\mbox{at}\,\, p.
\end{align*}
Now, we see that
\begin{align*}
\nabla^\perp_i\nabla_k^\perp\alpha(e_j,e_k)=& \nabla^\perp_i\Big((\nabla_k^\perp\alpha)(e_j,e_k)\Big)+(\nabla_i^\perp\alpha)(\nabla_ke_j,e_k)+\alpha(\nabla_i\nabla_ke_j,e_k)\\
&+\alpha(\nabla_ke_j,e_k)+\nabla^\perp_i\alpha(e_j,\nabla_ke_k)+\alpha(\nabla_ie_j,\nabla_ke_k)+\alpha(e_j,\nabla_i\nabla_ke_k).
\end{align*}
So,
\begin{align*}
  \nabla^\perp_i\nabla_k^\perp\alpha(e_j,e_k)=& \nabla^\perp_i\Big((\nabla_k^\perp\alpha)(e_j,e_k)\Big)+\alpha(\nabla_i\nabla_ke_j,e_k)+\alpha(e_j,\nabla_i\nabla_ke_k)\,\,\mbox{at}\,\, p. 
\end{align*}
Furthermore, from Codazzi equation together with \eqref{id1} and  \eqref{id2} we have
\begin{align*}
\nabla^\perp_i\nabla_k^\perp\alpha(e_j,e_k)=&\Big(\nabla_i^\perp\nabla_j^\perp\alpha\Big)(e_k,e_k)+e_i(B)(\langle T,e_j\rangle\delta_{kk}-\langle T,e_k\rangle\delta_{jk})\xi\\
 &+\frac{a'}{a}B\Big(\delta_{ij}\delta_{kk}-\delta_{ik}\delta_{jk}+2\varepsilon(\langle T,e_k\rangle\langle T,e_i\rangle\delta_{jk}-\langle T,e_i\rangle\langle T,e_j\rangle\delta_{kk})\Big)\xi\\
 &+B\Big(\langle A_\xi e_i,e_j\rangle\delta_{kk}-\langle A_\xi e_i,e_k\rangle\delta_{jk}\Big)\xi-B\Big(\langle T,e_j\rangle\delta_{kk}-\langle T,e_k\rangle\delta_{jk}\Big)\alpha(e_i,T)\\
&+\alpha(\nabla_i\nabla_ke_j,e_k)+\alpha(e_j,\nabla_i\nabla_ke_k)\,\,\mbox{at}\,\, p. 
\end{align*}
Then,
\begin{align*}
 \nabla^\perp_k\Big((\nabla^\perp_i\alpha)(e_k,e_j)\Big)&= \Big(\nabla_i^\perp\nabla_j^\perp\alpha\Big)(e_k,e_k)+e_i(B)(\langle T,e_j\rangle\delta_{kk}-\langle T,e_k\rangle\delta_{jk})\xi\\
 &+\frac{a'}{a}B\Big(\delta_{ij}\delta_{kk}-\delta_{ik}\delta_{jk}+2\varepsilon(\langle T,e_k\rangle\langle T,e_i\rangle\delta_{jk}-\langle T,e_i\rangle\langle T,e_j\rangle\delta_{kk})\Big)\xi\\
 &+B\Big(\langle A_\xi e_i,e_j\rangle\delta_{kk}-\langle A_\xi e_i,e_k\rangle\delta_{jk}\Big)\xi-B\Big(\langle T,e_j\rangle\delta_{kk}-\langle T,e_k\rangle\delta_{jk}\Big)\alpha(e_i,T)\\ 
&+\alpha(R(e_i,e_k)e_j,e_k)+\alpha(e_j,R(e_i,e_k)e_k)+R^\perp(e_k,e_i)\alpha(e_j,e_k)\,\,\mbox{at}\,\, p.   
\end{align*}
 From the last identity we have that \eqref{nabla} becomes
\begin{align*}
(\nabla^\perp_k\nabla^\perp_k\alpha)(e_i,e_j)=& \Big(\nabla_i^\perp\nabla_j^\perp\alpha\Big)(e_k,e_k)+e_i(B)(\langle T,e_j\rangle\delta_{kk}-\langle T,e_k\rangle\delta_{jk})\xi\\
&+e_k(B)(\langle T,e_i\rangle\delta_{kj}-\langle T,e_k\rangle\delta_{ij})\xi+2\varepsilon B\frac{a'}{a}\Big(\langle T,e_k\rangle^2\delta_{ij}-\langle T,e_i\rangle\langle T,e_j\rangle\delta_{kk}\Big)\xi\\
&+B\Big(\langle A_\xi e_i,e_j\rangle\delta_{kk}-\langle A_\xi e_k,e_k\rangle\delta_{ij}\Big)\xi-B(\langle T,e_i\rangle\delta_{kj}-\langle T,e_k\rangle\delta_{ij}\Big)\alpha(e_k,T)\\
&-B\Big(\langle T,e_j\rangle\delta_{kk}-\langle T,e_k\rangle\delta_{jk})\alpha(e_i,T)\Big) +\alpha(R(e_i,e_k)e_j,e_k)\\
&+\alpha(e_j,R(e_i,e_k)e_k)+R^\perp(e_k,e_i)\alpha(e_j,e_k)\,\,\mbox{at}\,\, p. 
\end{align*}
Therefore,
\begin{align*}
\sum_\beta\sum_{i,j,k}h_{ij}^\beta h^\beta_{ijkk}=&\sum_\beta\sum_{i,j,k}h^\beta_{ij}h^\beta_{kkji}\\
&+\sum_\beta\sum_{i,j,k}h^\beta_{ij}e_i(B)\Big(\langle T,e_j\rangle\delta_{kk}-\langle T,e_k\rangle\delta_{jk}\Big)\langle\xi,e_\beta\rangle\\
&+\sum_\beta\sum_{i,j,k}h^\beta_{ij}e_k(B)\Big(\langle T,e_i\rangle\delta_{kj}-\langle T,e_k\rangle\delta_{ij}\Big)\langle\xi,e_\beta\rangle\\
&+2\varepsilon \frac{a'}{a}B\sum_\beta\sum_{i,j,k}h^\beta_{ij}\Big(\langle T,e_k\rangle^2\delta_{ij}-\langle T,e_i\rangle\langle T,e_j\rangle\delta_{kk}\Big)\langle\xi,e_\beta\rangle\\
&+B\sum_\beta\sum_{i,j,k}h^\beta_{ij}\Big((\langle A_\xi e_i,e_j\rangle\delta_{kk}-\langle A_\xi e_k,e_k\rangle\delta_{ij}\Big)\langle\xi,e_\beta\rangle\\
&-B\sum_\beta\sum_{i,j,k}h^\beta_{ij}\Big(\langle T,e_i\rangle\delta_{kj}-\langle T,e_k\rangle\delta_{ij}\Big)\langle\alpha(e_k,T),e_\beta\rangle\\
&-B\sum_\beta\sum_{i,j,k}h^\beta_{ij}\Big(\langle T,e_j\rangle\delta_{kk}-\langle T,e_k\rangle\delta_{jk}\Big)\langle\alpha(e_i,T),e_\beta\rangle\\
&+\sum_\beta\sum_{i,j,k}h^\beta_{ij}\langle\alpha(R(e_i,e_k)e_j,e_k)+\alpha(e_j,R(e_i,e_k)e_k),e_\beta\rangle\\
&+\sum_\beta\sum_{i,j,k}h^\beta_{ij}\langle R^\perp(e_k,e_i)\alpha(e_j,e_k),e_\beta\rangle\,\,\mbox{at}\,\, p. 
\end{align*}
It is easy see that
\begin{align*}
 \sum_\beta\sum_{i,j,k}h_{ij}^\beta e_i(B)\Big(\langle T,e_j\rangle\delta_{kk}-\langle T,e_k\rangle\delta_{jk}\Big)=&\sum_\beta(n-1)\langle\alpha(\nabla B,T),e_\beta\rangle,
\end{align*}
\begin{align*}\label{eq2}
 \sum_\beta\sum_{i,j,k}h_{ij}^\beta e_k(B)\Big(\langle T,e_i\rangle\delta_{kj}-\langle T,e_k\rangle\delta_{ij}\Big)=&\sum_\beta(n\langle\alpha(\nabla B,T),e_\beta\rangle-\langle T,\nabla B\rangle tr(A_\beta)),
\end{align*}
\begin{align*}
    \sum_\beta\sum_{i,j,k}h_{ij}^\beta \Big(\langle T,e_k\rangle^2\delta_{ij}-\langle T,e_i\rangle\langle T,e_j\rangle\delta_{kk}\Big)=&\sum_\beta\Big(||T||^2tr(A_\beta)-n\langle\alpha(T,T),e_\beta\rangle\Big),
\end{align*}
\begin{align*}
    \sum_\beta\sum_{i,j,k}h_{ij}^\beta\Big(\langle A_\xi e_i,e_j\rangle\delta_{kk}-\langle A_\xi e_k,e_k\rangle\delta_{ij}\Big)=&\sum_\beta\Big(ntr(A_\xi A_\beta)-(tr(A_\xi))(tr(A_\beta))\Big),
\end{align*}
\begin{align*}
   \sum_\beta\sum_{i,j,k}h_{ij}^\beta\Big(\langle T,e_i\rangle\delta_{kj}-\langle T,e_k\rangle\delta_{ij}\Big)\langle\alpha(e_k,T),e_\beta\rangle=&\sum_\beta\Big(||A_\beta T||^2-\langle A_\beta T,T\rangle trA_\beta\Big)
\end{align*}
and
\begin{align*}
    \sum_\beta\sum_{i,j,k}h_{ij}^\beta\Big(\langle T,e_j\rangle\delta_{kk}-\langle T,e_k\rangle\delta_{jk}\Big)\langle\alpha(e_i,T),e_\beta\rangle=&\sum_\beta(n-1)||A_\beta T||^2.
\end{align*}
Now, using the Gauss equation we obtain that 
\begin{align*}
\langle\alpha(R(e_i,e_k)e_j,e_k),e_\beta\rangle=&b\Big(\delta_{ij}\langle e_k,A_\beta e_k\rangle-\delta_{kj}\langle e_i,A_\beta e_k\rangle\Big)\\
&+B\Big(\delta_{ij}\langle e_k,T\rangle\langle A_\beta e_k,T\rangle-\delta_{kj}\langle e_i,T\rangle\langle A_\beta e_k,T\rangle\Big)\\
&-B\Big(\langle e_i,A_\beta e_k\rangle\langle e_k,T\rangle\langle e_j,T\rangle-\langle e_k,A_\beta e_k\rangle\langle e_i,T\rangle\langle e_j,T\rangle\Big)\\
&-\langle\alpha(e_i,e_j),\alpha(e_k,A_\beta e_k)\rangle+\langle\alpha(e_i,A_\beta e_k),\alpha(e_k,e_j)\rangle,
\end{align*}
\begin{align*}
\langle\alpha(R(e_i,e_k)e_k,e_j),e_\beta\rangle=&b\Big(\delta_{ik}\langle e_k,A_\beta e_j\rangle-\delta_{kk}\langle e_i,A_\beta e_j\rangle\Big)\\
&+B\Big(\delta_{ik}\langle e_k,T\rangle\langle A_\beta e_j,T\rangle-\delta_{kk}\langle e_i,T\rangle\langle A_\beta e_j,T\rangle\Big)\\
&-B\Big(\langle e_i,A_\beta e_j\rangle\langle e_k,T\rangle\langle e_k,T\rangle-\langle e_k,A_\beta e_j\rangle\langle e_i,T\rangle\langle e_k,T\rangle\Big)\\
&-\langle\alpha(e_i,e_k),\alpha(e_k,A_\beta e_j)\rangle+\langle\alpha(e_i,A_\beta e_j),\alpha(e_k,e_k)\rangle,
\end{align*}
and from Ricci equation we have 
\begin{align*}
    \langle R^\perp(e_k,e_i)\alpha(e_j,e_k),e_\beta\rangle=&\langle\alpha(A_{\alpha(e_j,e_k)}e_i,e_k)-\alpha(A_{\alpha(e_j,e_k)}e_k,e_i),e_\beta\rangle\\
    =&\sum_\gamma\varepsilon_\gamma\langle A_\gamma e_j,e_k\rangle\Big(\langle\alpha(A_\gamma e_i,e_k),e_\beta\rangle-\langle\alpha(A_\gamma e_k,e_i),e_\beta\rangle\Big)\\
    =&\sum_\gamma\varepsilon_\gamma\langle A_\gamma e_j,e_k\rangle\Big(\langle A_\beta(A_\gamma e_i),e_k\rangle-\langle A_\beta(A_\gamma e_k),e_i\rangle\Big)\\
    =&\sum_{\gamma}\varepsilon_\gamma\langle A_\gamma e_j,e_k\rangle\langle [A_\beta,A_\gamma]e_i.e_k\rangle.
\end{align*}
Then,
\begin{align*} \sum_\beta\sum_{i,j,k}h_{ij}^\beta\langle\alpha(R(e_i,e_k)e_j,e_k),e_\beta\rangle=&\sum_\beta b((trA_\beta)^2-trA_\beta^2)\\
&+\sum_\beta 2B(\langle A_\beta T,T\rangle trA_\beta -||A_\beta T||^2)\Big)\\
&+\sum_{\beta,\gamma}\varepsilon_\gamma\Big(-(trA_\gamma A_\beta)^2+\sum_i\langle A_\beta A_\gamma e_i,A_\gamma A_\beta e_i\rangle\Big),
&\\
\sum_\beta\sum_{i,j,k}h_{ij}^\beta\langle\alpha(R(e_i,e_k)e_k,e_j),e_\beta\rangle&=\sum_\beta\Big( (b(1-n)-B||T||^2)trA_\beta^2+B(2-n)||A_\beta T||^2\Big)\\
&+\sum_{\beta,\gamma}\varepsilon_\gamma\Big((trA_\gamma)(trA_\beta^2A_\gamma)-\sum_i\langle A_\beta A_\gamma e_i,A_\beta A_\gamma e_i\rangle\Big)
\end{align*}
and
\begin{align*}
 \sum_\beta\sum_{i,j,k}h_{ij}^\beta\langle R^\perp(e_k,e_i)\alpha(e_j,e_k),e_\beta\rangle=&\sum_{\beta,\gamma}\displaystyle\sum_i\varepsilon_\gamma\langle [A_\beta,A_\gamma] e_i,A_\gamma A_\beta e_i\rangle.  
\end{align*}
Combining the above identities and noting that $b'=2B\varepsilon\frac{a'}{a}$  we can conclude that
 \begin{align*}
 \sum_\beta\sum_{i,j,k}h_{ij}^\beta h^\beta_{ijkk}=&\sum_\beta\sum_{i,j,k}h^\beta_{ij}h^\beta_{kkji}+\sum_\beta\Big(n\langle A_\beta(\nabla B),T\rangle-\langle\nabla B,T\rangle trA_\beta\Big)\langle\xi,e_\beta\rangle\\ 
 &+\sum_\beta b'\Big(||T||^2trA_\beta-n\langle A_\beta T,T\rangle\Big)\langle\xi,e_\beta\rangle\\
 &+\sum_\beta B\Big(ntr(A_\xi A_\beta)-(trA_\xi)(trA_\beta)\Big)\langle\xi,e_\beta\rangle\\
 &+\sum_\beta B\Big(3\langle A_\beta T,T\rangle trA_\beta-2n||A_\beta T||^2-||T||^2trA_\beta^2\Big)\\
 &+\sum_\beta b\Big((trA_\beta)^2-ntrA_\beta^2\Big)\\
&+\sum_{\beta,\gamma}\varepsilon_\gamma\Big((trA_\gamma)tr(A_\beta^2A_\gamma)+tr([A_\gamma,A_\beta]^2)-(tr(A_\beta A_\gamma))^2\Big)
 \end{align*}
which complete the proof of Theorem.
\subsection{Immediate applications of Theorem \ref{T1}}
Herein, we discuss some immediate consequences of Theorem \ref{T1}, starting with the case where $M$ has codimension one.
\begin{corollary}\label{C1}
   Let $f:M^n\rightarrow\varepsilon I\times_a\mathbb Q^n_s(c)$ be a spacelike isometric immersion with shape operator $A$ and second fundamental form $\alpha$. Then,  
    \begin{align*}
\dfrac{1}{2}\Delta|\alpha|^2=& \sum_{i,j,k}h_{ij}h_{kkji}+|\nabla^\perp\alpha|^2+n\Big\langle \nabla\Big(\dfrac{a''}{a}-\varepsilon b\Big),(A-HI_n)(T)\Big\rangle\langle\xi,\nu\rangle\\ 
 &-nb'\Big\langle T, (A-HI_n)(T)\Big\rangle\langle\xi,\nu\rangle+n\Big(\varepsilon\dfrac{a''}{a}- b\Big)\Big(S-nH^2\Big)\langle\xi,\nu\rangle^2\\
 &+\Big(\dfrac{a''}{a}-\varepsilon b\Big)\Big(3n\langle A T,T\rangle H -2n||A T||^2-||T||^2S\Big)\\
 &+ b\Big(n^2H^2-nS\Big)+\delta\Big(nH(trA^3)-S^2\Big),
 \end{align*}
 where $\xi=\partial t-T$, $\nu$ is a local unit normal vector field of $M$ in $\varepsilon I\times_a\mathbb Q^n_s(c)$, i.e., $\langle\nu,\nu\rangle=\delta\in\{-1,1\}$, $I_n$ is identity map of $TM$, $H$ is the mean curvature of $M$ and $S=tr A^2$.

 In particular, if $f$ is extremal, then
 \begin{align*}
\dfrac{1}{2}\Delta|\alpha|^2=& |\nabla^\perp\alpha|^2+n\Big\langle \nabla\Big(\dfrac{a''}{a}-\varepsilon b\Big),AT\Big\rangle\langle\xi,\nu\rangle-nb'\Big\langle AT,T\Big\rangle\langle\xi,\nu\rangle+n\Big(\varepsilon\dfrac{a''}{a}- b\Big)\langle\xi,\nu\rangle^2 S\\
 &-\Big(\dfrac{a''}{a}-\varepsilon b\Big)\Big( 2n||A T||^2+||T||^2S\Big)-nbS
 -\delta S^2.
 \end{align*} 
\end{corollary}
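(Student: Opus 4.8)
The plan is to obtain Corollary \ref{C1} as the codimension-one specialization of Theorem \ref{T1}, so the whole argument is a substitution together with careful sign bookkeeping. When $m=0$ the normal bundle $TM^{\perp}$ is a line bundle, so I would fix a local unit normal $\nu$ with $\langle\nu,\nu\rangle=\delta\in\{-1,1\}$; in the notation of Theorem \ref{T1} the only admissible value of the normal indices $\beta,\gamma$ is $n+1$, with $e_{n+1}=\nu$, $\varepsilon_{n+1}=\delta$ and $A_{n+1}=A_{\nu}=:A$. Writing $\alpha(X,Y)=\delta\langle AX,Y\rangle\nu$ gives $h_{ij}=\langle Ae_i,e_j\rangle$, hence $|\alpha|^{2}=tr\,A^{2}=S$ and $tr\,A=nH$; also $\xi=\partial t-T\in TM^{\perp}$ decomposes as $\xi=\delta\langle\xi,\nu\rangle\nu$, so that $A_{\xi}=\delta\langle\xi,\nu\rangle A$, $tr\,A_{\xi}=\delta\langle\xi,\nu\rangle\,nH$ and $tr(A_{\xi}A)=\delta\langle\xi,\nu\rangle\,S$.

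With these identifications I would rewrite the right-hand side of Theorem \ref{T1} term by term. The terms $\sum_{\beta}\sum_{i,j,k}h^{\beta}_{ij}h^{\beta}_{kkji}$ and $|\nabla^{\perp}\alpha|^{2}$ carry over verbatim as $\sum_{i,j,k}h_{ij}h_{kkji}$ and $|\nabla^{\perp}\alpha|^{2}$. Using $tr\,A_{\beta}=nH$, the gradient line collapses to $n\langle\nabla(\tfrac{a''}{a}-\varepsilon b),(A-HI_{n})(T)\rangle\langle\xi,\nu\rangle$, the $b'$-line to $-nb'\langle (A-HI_{n})(T),T\rangle\langle\xi,\nu\rangle$, the line with $3\langle A_{\beta}T,T\rangle$ to $(\tfrac{a''}{a}-\varepsilon b)(3nH\langle AT,T\rangle-2n||AT||^{2}-||T||^{2}S)$, and $\sum_{\beta}b((tr\,A_{\beta})^{2}-n\,tr\,A_{\beta}^{2})$ to $b(n^{2}H^{2}-nS)$. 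For the $A_{\xi}$-line, $\sum_{\beta}(\tfrac{a''}{a}-\varepsilon b)(n\,tr(A_{\xi}A_{\beta})-(tr\,A_{\xi})(tr\,A_{\beta}))\langle\xi,e_{\beta}\rangle$ becomes $n\delta(\tfrac{a''}{a}-\varepsilon b)(S-nH^{2})\langle\xi,\nu\rangle^{2}$, which is the stated $n(\varepsilon\tfrac{a''}{a}-b)(S-nH^{2})\langle\xi,\nu\rangle^{2}$ once one uses $\delta=\varepsilon$ (equivalently, the identity $\delta(\tfrac{a''}{a}-\varepsilon b)=\varepsilon\tfrac{a''}{a}-b$). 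Finally, the Ricci/commutator double sum $\sum_{\beta,\gamma}\varepsilon_{\gamma}((tr\,A_{\gamma})\,tr(A_{\beta}^{2}A_{\gamma})+tr([A_{\gamma},A_{\beta}]^{2})-(tr(A_{\beta}A_{\gamma}))^{2})$ reduces to the single index $\beta=\gamma=n+1$, and since $[A,A]=0$ it equals $\delta((tr\,A)(tr\,A^{3})-(tr\,A^{2})^{2})=\delta(nH\,tr\,A^{3}-S^{2})$. Assembling these pieces yields the first displayed identity of the corollary.

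For the extremal case I would simply set $\Vec{H}\equiv 0$. Then $H=0$ kills the $H$-proportional summands in every line, $b(n^{2}H^{2}-nS)$ becomes $-nbS$, the Ricci term becomes $-\delta S^{2}$, and the $3\langle AT,T\rangle$-line becomes $-(\tfrac{a''}{a}-\varepsilon b)(2n||AT||^{2}+||T||^{2}S)$; moreover the leftover term $\sum_{i,j,k}h_{ij}h_{kkji}$ vanishes, because contraction commutes with the iterated normal connection, so that $\sum_{k}h_{kkji}=\langle\nabla^{\perp}_{i}\nabla^{\perp}_{j}(n\Vec{H}),\nu\rangle=0$. This gives the second displayed identity.

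The computation is entirely mechanical; the only point that genuinely needs care is the bookkeeping of the causal sign $\delta$ — in particular, checking that the double normal sum collapses to exactly $\delta(nH\,tr\,A^{3}-S^{2})$ (the Nomizu--Smyth combination $(tr\,A)(tr\,A^{3})-S^{2}$ weighted by $\delta$) and that $\delta(\tfrac{a''}{a}-\varepsilon b)$ really coincides with $\varepsilon\tfrac{a''}{a}-b$. So I expect that, rather than any conceptual difficulty, to be the one spot where a sign could slip.
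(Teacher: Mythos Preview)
Your approach --- specializing Theorem \ref{T1} to codimension one --- is exactly what the paper intends; Corollary \ref{C1} is stated there without proof as an immediate consequence of the main theorem, and your term-by-term reduction is the right mechanism. Your handling of the extremal case, in particular the observation that $\sum_k h_{kkji} = n\langle\nabla^\perp_i\nabla^\perp_j\Vec{H},\nu\rangle = 0$, is also correct.

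There is, however, a genuine gap at precisely the spot you flagged: the identity $\delta = \varepsilon$ is \emph{not} valid in general. Take for instance $\varepsilon = 1$ and $s = 1$; then the ambient $I\times_a\mathbb{Q}^n_1(c)$ is Lorentzian, and any spacelike hypersurface has timelike unit normal, so $\delta = -1 \neq \varepsilon = 1$. (The paper itself uses this very case in the proof of Theorem \ref{c1}, where it records $\varepsilon_{n+1} = -1$ when $0 < s = m + \tfrac{1+\varepsilon}{2}$.) Your honest specialization of the $A_\xi$-line yields $n\delta\big(\tfrac{a''}{a}-\varepsilon b\big)(S-nH^2)\langle\xi,\nu\rangle^2$, and this cannot be rewritten as $n\big(\varepsilon\tfrac{a''}{a}-b\big)(S-nH^2)\langle\xi,\nu\rangle^2$ unless $\delta = \varepsilon$. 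So your computation is right and you correctly isolated the delicate point; the resolution, though, is not to assert $\delta = \varepsilon$ but to recognize that the displayed coefficient in the corollary should read $\delta\big(\tfrac{a''}{a}-\varepsilon b\big)$. This agrees with the stated form in the Riemannian case ($\varepsilon = \delta = 1$) and the Robertson--Walker case ($\varepsilon = \delta = -1$), and the discrepancy is invisible in the constant-curvature applications (Corollaries \ref{FH} and \ref{co3}) since there $\tfrac{a''}{a} = \varepsilon b$.
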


The following result is the Riemannian version of Theorem \ref{T1} for hypersurfaces in space forms.

  \begin{corollary}[Nomizu and Smyth \cite{NS}]\label{NS}
 Let $f:M^n\rightarrow\mathbb Q^{n+1}(c)$ be an isometric immersion. 
 Then,
  \begin{align*}
    \dfrac{1}{2}\Delta|\alpha|^2=&|\nabla^\perp\alpha|^2+\sum_{i,j,k}h_{ij}h_{kkji}+cn(S-nH^2)-S^2+nH(tr A^3).
\end{align*}
In particular, if $H$ is constant, then
 \begin{align*}
    \dfrac{1}{2}\Delta|\alpha|^2=&|\nabla^\perp\alpha|^2+cn(S-nH^2)-S^2+nH(tr A^3).
\end{align*} 
 \end{corollary}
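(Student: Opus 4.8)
The plan is to obtain Corollary \ref{NS} as the specialization of Corollary \ref{C1} (equivalently of Theorem \ref{T1}) to the ambient $\mathbb Q^{n+1}(c)$, which is the Riemannian space form of constant curvature $c$. First I would write $\mathbb Q^{n+1}(c)$ as a warped product $\varepsilon I\times_a\mathbb Q^n_s(c)$ in the degenerate/trivial way that makes the warping data constant: take $s=0$, $\varepsilon=1$, $\delta=1$, and note that in this Riemannian setting the condition \eqref{constant} holds, so $\tfrac{a''}{a}=\tfrac{(a')^2}{a^2}-\tfrac{c}{a^2}$. Consequently $b=\tfrac{(a')^2-c}{a^2}=\tfrac{a''}{a}$ and $B=\tfrac{a''}{a}-\varepsilon b=0$. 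This is the key simplification: every term in Corollary \ref{C1} carrying a factor $B=\tfrac{a''}{a}-\varepsilon b$ vanishes, and since $b$ is then forced to be constant along $M$ (indeed one can arrange $b=c$ after normalizing, as $\mathbb Q^{n+1}(c)$ genuinely has constant curvature $c$), we also get $b'=0$ and $\nabla B=0$.

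Next I would carry out the bookkeeping on the surviving terms. With $B=0$ the third, fourth, fifth and sixth lines of the formula in Corollary \ref{C1} drop out entirely; what remains is
\[
\tfrac12\Delta|\alpha|^2=|\nabla^\perp\alpha|^2+\sum_{i,j,k}h_{ij}h_{kkji}+b\big(n^2H^2-nS\big)+\delta\big(nH\,\mathrm{tr}A^3-S^2\big).
\]
Then I identify $b$ with the constant curvature: since the warped product reduces to the genuine space form $\mathbb Q^{n+1}(c)$, equation \eqref{constant} gives $\kappa=c$ and $b=\tfrac{(a')^2-c}{a^2}$; choosing the standard model (e.g.\ $a\equiv 1$ so $c=0$, or the appropriate $a$ in the sphere/hyperbolic cases) makes $b=c$. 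With $\delta=1$ this yields exactly $\tfrac12\Delta|\alpha|^2=|\nabla^\perp\alpha|^2+\sum_{i,j,k}h_{ij}h_{kkji}+cn(S-nH^2)-S^2+nH\,\mathrm{tr}A^3$, which is the asserted formula. The ``in particular'' statement follows because if $H$ is constant then $\nabla H=0$, and in codimension one the Codazzi equation in $\mathbb Q^{n+1}(c)$ reads $(\nabla^\perp_k\alpha)(e_i,e_j)$ symmetric in all indices; tracing gives $\sum_k h_{kkj}=n\,\partial_j H=0$, hence $\sum_{i,j,k}h_{ij}h_{kkji}=\sum_{i,j}h_{ij}\big(\sum_k h_{kkj}\big)_{,i}=0$, so that term disappears.

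The main obstacle is essentially notational rather than mathematical: one must be careful that the reduction $B=0$, $b'=0$, $\nabla B=0$ genuinely kills every warped-product correction term in Corollary \ref{C1}, and that the residual $b$ is correctly identified with $c$ after choosing the right model function $a$ — i.e.\ checking that the constant-curvature hypothesis \eqref{constant} is compatible with the normalization $b=c$ in each of the three cases $c<0$, $c=0$, $c>0$. I would also double-check the sign conventions ($\varepsilon=\delta=1$, $\xi$ and $T$ play no role once $\mathbb Q^{n+1}(c)$ has no distinguished time direction) so that the cross terms involving $T$, which formally survive as $n\langle\nabla B,(A-HI_n)T\rangle\langle\xi,\nu\rangle$ and $-nb'\langle T,(A-HI_n)T\rangle\langle\xi,\nu\rangle$, vanish because $\nabla B=0$ and $b'=0$ — not because $T=0$. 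Once these identifications are in place, the corollary is immediate, and the constant-$H$ case follows from the traced Codazzi equation as above.
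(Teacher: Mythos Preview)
Your approach is essentially the paper's: realize $\mathbb Q^{n+1}(c)$ as a constant-curvature warped product so that $B=0$, $b'=0$, $\nabla B=0$, and then read off the surviving terms of Corollary~\ref{C1}. One correction, however: equation~\eqref{constant} with $\varepsilon=1$ gives $b=\tfrac{a''}{a}=-\kappa=-c$, not $b=c$; your example ``$a\equiv 1$'' only models the product $I\times\mathbb Q^n$, hence only the flat case $c=0$, where the sign discrepancy is invisible. With the correct identification $b=-c$ one indeed gets $b(n^2H^2-nS)=cn(S-nH^2)$, so your final formula is right even though the intermediate claim $b=c$ is not --- the paper avoids this slip by writing down the explicit warping functions and verifying $b=-c$ directly.
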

\begin{proof}
We note that $\mathbb Q^{n+1}(c)$ can be described as warped product space $(0,+\infty)\times_{t}\mathbb S^n(1)$ if $c=0$ and $(0,r)\times_{a_c}\mathbb S^n\Big(\frac{1}{|c|}\Big)$ with
\begin{align*}
a_c(t)=\left\{\begin{array}{cc}
    \dfrac{\sin(\sqrt{c}t)}{c},&\mbox{if}\quad c>0\quad\mbox{and}\quad r=\dfrac{\pi}{\sqrt{c}}, \\
      \dfrac{\sinh({\sqrt{|c|}}t)}{|c|},&\mbox{if}\quad c<0\quad\mbox{and}\quad r=+\infty 
\end{array}\right.,\end{align*} Then,
    $b=-c$,
and the proof follows from Corollary \ref{C1}.
\end{proof}

We draw attention that if $\varepsilon I\times_a\mathbb Q_s^n(c), n>1,$ has constant curvature, including the Lorentz-Minkowski space $\mathbb R^{n+1}_1$, the de Sitter space $\mathbb S_1^{n+1}$ and the anti-de Sitter space $\mathbb H_1^{n+1}$, we can obtain a result analogous to Corollary \ref{NS} for spacelike hypersurfaces in these spacetimes.

\begin{corollary}\label{FH} 
Any spacelike hypersurface $M^n$ in a semi-Riemannian warped product space $\varepsilon I\times_a\mathbb Q^n_s(c)$ of constant curvature $\kappa$ satisfies
\begin{align*}
    \dfrac{1}{2}\Delta|\alpha|^2=&|\nabla^\perp\alpha|^2+\sum_{j,k,l}h_{ij}h_{kkji}+n\kappa(S-nH^2)+\delta\Big(nH(trA^3)-S^2\Big). 
\end{align*}
In particular, if $M^n$ is pmc , i.e, the mean curvature vector field $\Vec H$ of $M$ is parallel with respect to normal connetion $\nabla^\perp$, then
\begin{align*}
    \dfrac{1}{2}\Delta|\alpha|^2=&|\nabla^\perp\alpha|^2+n\kappa(S-nH^2)+\delta\Big(nH(trA^3)-S^2\Big).
\end{align*}
\end{corollary}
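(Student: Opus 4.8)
The strategy is to specialize the codimension-one formula of Corollary \ref{C1} by feeding in the algebraic identities that characterize constant curvature. By \eqref{constant} the warping function satisfies $\frac{a''}{a}=\frac{(a')^2}{a^2}-\frac{\varepsilon c}{a^2}=-\varepsilon\kappa$; multiplying the second equality by $\varepsilon$ gives $b=\frac{\varepsilon(a')^2-c}{a^2}=-\kappa$, so $b$ is constant and $b'=0$. Consequently $B:=\frac{a''}{a}-\varepsilon b=-\varepsilon\kappa+\varepsilon\kappa=0$ and also $\varepsilon\frac{a''}{a}-b=-\kappa+\kappa=0$. Hence each of the four ``warped-product'' coefficients appearing in Corollary \ref{C1}, namely $\nabla\big(\frac{a''}{a}-\varepsilon b\big)$, $b'$, $\varepsilon\frac{a''}{a}-b$ and $\frac{a''}{a}-\varepsilon b$, vanishes identically, so every term there carrying a factor $\langle\xi,\nu\rangle$ or $\langle\xi,\nu\rangle^2$, as well as the term $\big(\frac{a''}{a}-\varepsilon b\big)\big(3nH\langle AT,T\rangle-2n\|AT\|^2-\|T\|^2S\big)$, disappears.

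What survives is $\frac12\Delta|\alpha|^2=\sum_{i,j,k}h_{ij}h_{kkji}+|\nabla^\perp\alpha|^2+b(n^2H^2-nS)+\delta\big(nH(trA^3)-S^2\big)$, and replacing $b$ by $-\kappa$ rewrites $b(n^2H^2-nS)$ as $n\kappa(S-nH^2)$, which is exactly the claimed identity. For the pmc case I would use that, in codimension one, $\sum_k h^\beta_{kk}=\big\langle\sum_k\alpha(e_k,e_k),e_\beta\big\rangle=n\langle\vec H,e_\beta\rangle$; evaluating at the point $p$ where $\nabla e_i=0$ this gives $\sum_k h^\beta_{kkj}=n\langle\nabla^\perp_{e_j}\vec H,e_\beta\rangle$ and then $\sum_k h^\beta_{kkji}=n\langle\nabla^\perp_{e_i}\nabla^\perp_{e_j}\vec H,e_\beta\rangle$. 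If $\vec H$ is parallel these vanish, so $\sum_{i,j,k}h_{ij}h_{kkji}=0$ and the formula collapses to the stated pmc version.

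There is essentially no hard analytic step here: the work is entirely the bookkeeping in the first paragraph, checking that each of the numerous correction terms of Corollary \ref{C1} really does carry one of the four coefficients that vanish under \eqref{constant}, together with the short verification that parallelism of $\vec H$ kills the trace terms $\sum_k h^\beta_{kkji}$. One could alternatively bypass Corollary \ref{C1} and substitute $B=0$, $b=-\kappa$, $b'=0$ directly into Theorem \ref{T1} with $m=1$, which amounts to the same computation.
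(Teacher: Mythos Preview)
Your proof is correct and follows exactly the approach indicated in the paper, which simply says the result follows from Corollary \ref{C1} and equation \eqref{constant}; you have merely spelled out the bookkeeping that the paper leaves implicit, including the verification that parallelism of $\vec H$ forces $\sum_k h_{kkji}=0$.
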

\begin{proof} 
The statement of result follows from Corollary \ref{C1} and from equation \eqref{constant}.
\end{proof}

\begin{corollary}\label{co3}
Any pmc compact spacelike hypersurface $M^n$ of non-negative sectional curvature in a semi-Riemannian warped product space $\varepsilon I\times_a\mathbb Q^n_s(c)$ of constant curvature is isoparametric. 
\end{corollary}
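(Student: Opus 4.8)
\textit{Proof proposal.} The plan is to combine the parallel-mean-curvature version of Corollary~\ref{FH} with the classical Nomizu--Smyth integration argument. Since $M$ is closed, $\int_M\Delta|\alpha|^2\,dM=0$ by the divergence theorem, so Corollary~\ref{FH} gives
\[
0=\int_M\Big(|\nabla^\perp\alpha|^2+n\kappa(S-nH^2)+\delta\big(nH\,\mathrm{tr}A^3-S^2\big)\Big)\,dM.
\]
The idea is to recognize the integrand as a sum of pointwise non-negative terms; then it must vanish identically, and in particular $\nabla^\perp\alpha\equiv0$, which I will show forces the principal curvatures to be constant.

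First I would rewrite the curvature contribution in terms of the principal curvatures. Fix $p\in M$ and diagonalize the shape operator $A$ at $p$ (it is self-adjoint with respect to the positive-definite induced metric, since $M$ is spacelike), with eigenvalues $\lambda_1,\dots,\lambda_n$ and orthonormal eigenvectors $e_1,\dots,e_n$. Using $nH=\sum_k\lambda_k$, $S=\sum_k\lambda_k^2$ and $\mathrm{tr}A^3=\sum_k\lambda_k^3$, the elementary identities
\[
n(S-nH^2)=\sum_{i<j}(\lambda_i-\lambda_j)^2,\qquad nH\,\mathrm{tr}A^3-S^2=\sum_{i<j}\lambda_i\lambda_j(\lambda_i-\lambda_j)^2
\]
give
\[
n\kappa(S-nH^2)+\delta\big(nH\,\mathrm{tr}A^3-S^2\big)=\sum_{i<j}\big(\kappa+\delta\lambda_i\lambda_j\big)(\lambda_i-\lambda_j)^2.
\]
Next, in the constant-curvature case one has $b=-\kappa$ and $B=\frac{a''}{a}-\varepsilon b=0$ by \eqref{constant}, and a direct evaluation of the Gauss equation on the pair of principal directions $e_i,e_j$ shows that the sectional curvature of $M$ along the plane they span equals exactly $\kappa+\delta\lambda_i\lambda_j$. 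Hence the hypothesis of non-negative sectional curvature yields $\kappa+\delta\lambda_i\lambda_j\ge0$ for all $i\neq j$ at every point of $M$, so the integrand displayed above is non-negative; therefore it vanishes identically, and in particular $|\nabla^\perp\alpha|^2\equiv0$.

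Finally I would convert $\nabla^\perp\alpha\equiv0$ into a statement about $A$. Since the normal bundle of a hypersurface is a line bundle with $\nabla^\perp\nu=0$, one computes $(\nabla^\perp_X\alpha)(Y,Z)=\delta\langle(\nabla_XA)Y,Z\rangle\nu$, so $\nabla^\perp\alpha\equiv0$ is equivalent to $\nabla A\equiv0$. A parallel self-adjoint operator has constant characteristic polynomial on the connected manifold $M$, hence constant eigenvalues and constant multiplicities; that is, $M$ is isoparametric.

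I expect the only genuinely delicate point to be the sign bookkeeping in the Gauss-equation step: one must verify that the sectional curvature of the principal planes is $\kappa+\delta\lambda_i\lambda_j$ rather than its negative, and that this is compatible with the sign of the term $\delta(nH\,\mathrm{tr}A^3-S^2)$ appearing in Corollary~\ref{FH}, bearing in mind that $\delta=\langle\nu,\nu\rangle$ may be $+1$ or $-1$ according to the ambient signature. Everything else is the standard Nomizu--Smyth argument together with the elementary fact that a parallel shape operator has constant principal curvatures.
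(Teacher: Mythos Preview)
Your proposal is correct and follows essentially the same route as the paper: both rewrite the Simons formula from Corollary~\ref{FH} as $\tfrac12\Delta|\alpha|^2=|\nabla^\perp\alpha|^2+\sum_{i<j}(\lambda_i-\lambda_j)^2K_{ij}$ with $K_{ij}=\kappa+\delta\lambda_i\lambda_j$ the sectional curvature of the principal plane, and then use compactness. The only cosmetic difference is that the paper invokes the subharmonic-function argument ($\Delta|\alpha|^2\ge0$ on a compact manifold forces $\Delta|\alpha|^2\equiv0$, citing Yano) rather than integrating, and is terser about the final step from $\nabla^\perp\alpha=0$ to constant principal curvatures, which you spell out explicitly.
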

\begin{proof} Following the ideas of Nomizu and Smyth \cite[page 372]{NS}, 
one can show that if $\{e_1,\ldots,e_n\}$ is an orthonormal basis in $T_{f(x)}M$ for each $x\in M$ such that $Ae_i=\lambda_ie_i, 1\leq i\leq n$, then 
\begin{equation*}
 \kappa(ntrA^2-(trA)^2)+\delta\Big((trA)trA^3-(trA^2)^2\Big)=\sum_{i<j}(\lambda_i-\lambda_j)^2K_{ij},
\end{equation*}
for all $i,j$, where $K_{ij}=(\kappa+\delta\lambda_i\lambda_j), 1\leq i,j\leq n$, is the seccional curvature of $M$. 

Therefore, from Corollary \ref{FH} together with the above identity we have that
    \begin{align*}
        \frac{1}{2}\Delta|\alpha|^2=|\nabla^\perp\alpha|^2+\sum_{i<j}(\lambda_i-\lambda_j)^2K_{ij}.
    \end{align*}
Since $K_{ij}\geq0, \forall i, j$, it follows that $\Delta|\alpha|^2\geq0$. As $M$ is compact we conclude that $\Delta|\alpha|^2=0$ and $|\alpha|^2$ is constant (see \cite[page 215, Theorem 1.3]{Yano}).
    
\end{proof}

We end this section with Simons' type formula for submanifolds in product spaces of type $\mathbb R\times\mathbb Q^n(c)$ that is a particular case of Theorem \ref{T1}.

\begin{corollary}[Fetcu and Rosenberg \cite{DH}]\label{CoRosen}
Let $f:M^n\rightarrow\mathbb R\times\mathbb Q^{n+m}(c)$ be an isometric immersion. Then,
\begin{align*}
 \dfrac{1}{2}\Delta|\alpha|^2=&|\nabla^\perp\alpha|^2+\sum_\beta\sum_{i,j,k}h^\beta_{ij}h^\beta_{kkji}\\
 &+c\sum_\beta\Big((n-||T||^2) trA_\beta^2 -2n||A_\beta T||^2+3trA_\beta\langle A_\beta T,T\rangle\Big)\\
 &-c\sum_\beta(trA_\beta)^2+c\Big(ntrA_\beta^2-(trA_\beta)^2\Big)\\
&+\sum_{\beta,\gamma}\Big((trA_\gamma)tr(A_\beta^2A_\gamma)+tr([A_\gamma,A_\beta]^2)-(tr(A_\beta A_\gamma))^2\Big).  
\end{align*}
In particular, if $f$ is a pmc, then
\begin{align*}
      \dfrac{1}{2}\Delta|\alpha|^2=&|\nabla^\perp\alpha|^2+c\sum_\beta\Big((n-||T||^2) trA_\beta^2 -2n||A_\beta T||^2+3trA_\beta\langle A_\beta T,T\rangle\Big)\\
 &-c\sum_\beta(trA_\beta)^2+c\Big(ntrA_\beta^2-(trA_\beta)^2\Big)\\
&+\sum_{\beta,\gamma}\Big((trA_\gamma)tr(A_\beta^2A_\gamma)+tr([A_\gamma,A_\beta]^2)-(tr(A_\beta A_\gamma))^2\Big).
\end{align*}
\end{corollary}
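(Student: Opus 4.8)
The idea is to recover the corollary as the specialization of Theorem~\ref{T1} to a constant warping function. First I would note that $\mathbb R\times\mathbb Q^{n+m}(c)$ is exactly the semi-Riemannian warped product $\varepsilon I\times_a\mathbb Q^{n+m}_s(c)$ with $I=\mathbb R$, $\varepsilon=1$, $s=0$ and constant warping function $a\equiv 1$. Consequently $a'\equiv a''\equiv 0$, so from the definitions $b=\dfrac{\varepsilon(a')^2-c}{a^2}$ and $B=\dfrac{a''}{a}-\varepsilon b$ one gets $b=-c$, $B=c$, $b'=0$ and $\nabla\!\left(\dfrac{a''}{a}-\varepsilon b\right)=0$. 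Moreover, since $s=0$ the ambient space is Riemannian, hence so is the normal bundle of $f$; thus $\varepsilon_\gamma=1$ for all $\gamma$ and $\{e_{n+1},\dots,e_{n+m+1}\}$ is a genuine orthonormal frame of $TM^\perp$.

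Next I would substitute these data into the formula of Theorem~\ref{T1}. The two lines carrying the factors $\nabla\!\left(\dfrac{a''}{a}-\varepsilon b\right)$ and $b'$ vanish identically, and the $\varepsilon_\gamma$'s in the last line become $1$. In the line involving the shape operator $A_\xi$ I would write $\xi=\sum_\beta\langle\xi,e_\beta\rangle e_\beta$ (legitimate because $\varepsilon_\beta=1$), so that $A_\xi=\sum_\beta\langle\xi,e_\beta\rangle A_\beta$ and hence
\[
\sum_\beta B\bigl(n\,tr(A_\xi A_\beta)-(trA_\xi)(trA_\beta)\bigr)\langle\xi,e_\beta\rangle=c\bigl(n\,tr(A_\xi^2)-(trA_\xi)^2\bigr).
\]
Putting this together with the remaining $c$-proportional contributions, namely $c\sum_\beta\bigl(3\langle A_\beta T,T\rangle trA_\beta-2n\|A_\beta T\|^2-\|T\|^2 trA_\beta^2\bigr)$, $c\sum_\beta\bigl(n\,trA_\beta^2-(trA_\beta)^2\bigr)$, and the Gauss/Ricci contractions $\sum_{\beta,\gamma}\bigl((trA_\gamma)tr(A_\beta^2A_\gamma)+tr([A_\gamma,A_\beta]^2)-(tr(A_\beta A_\gamma))^2\bigr)$, and using $\langle\partial t,\partial t\rangle=1=\|T\|^2+\|\xi\|^2$ to consolidate the $\|T\|^2$-dependent terms, one arrives at the first displayed identity.

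Finally, for the \emph{pmc} statement I would invoke the standard fact that, with $\{e_i\}$ orthonormal and $(\nabla e_i)(p)=0$, one has $\sum_k(\nabla^\perp_i\nabla^\perp_j\alpha)(e_k,e_k)=n\,\nabla^\perp_i\nabla^\perp_j\vec H$, so that
\[
\sum_\beta\sum_{i,j,k}h^\beta_{ij}h^\beta_{kkji}=n\sum_{i,j}\bigl\langle\alpha(e_i,e_j),\nabla^\perp_i\nabla^\perp_j\vec H\bigr\rangle,
\]
which vanishes when $\vec H$ is parallel with respect to $\nabla^\perp$; dropping this term yields the second displayed identity. The only genuine work is the algebraic bookkeeping in the second step: several independent copies of $c$ enter (from $b=-c$, from $B=c$ multiplying both the $T$-terms and the $A_\xi$-term, and from the curvature contractions), and they must be regrouped carefully into the compact form stated; keeping track of the $A_\xi$-term together with the identity $\|T\|^2+\|\xi\|^2=1$ is where one has to be most careful, while everything else is a direct substitution.
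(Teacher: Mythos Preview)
Your proposal is correct and follows exactly the paper's approach: specialize Theorem~\ref{T1} to $a\equiv 1$, $\varepsilon=1$, $s=0$, obtaining $b=-c$, $B=c$, $b'=0$, $\varepsilon_\gamma=1$, and then substitute. In fact you supply considerably more detail than the paper's one-line proof (which just records $a=1$, $\varepsilon=1$, $s=0$, $\varepsilon_\beta=1$, $b=-c$ and invokes Theorem~\ref{T1}), including the handling of the $A_\xi$-term and the justification that $\sum_{i,j,k}h^\beta_{ij}h^\beta_{kkji}$ vanishes in the pmc case.
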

\begin{proof}
We have that $a=1,\varepsilon=1$ and $s=0$, so $\varepsilon_\beta=1, \forall \beta,$ and $b=-c$. From Theorem \ref{T1} we obtain the result.
\end{proof}

 \section{Pseudo-parallel submanifolds and Simons' type formula}\label{parallel}

We begin by briefly discussing some important concepts 
that will be used henceforth.
 Let $f: M^n\rightarrow\overline M^{n+m}$ be an isometric immersion with second fundamental form $\alpha$. We say that:
\begin{enumerate}
    \item $f$ is totally geodesic if $\alpha(X,Y)=0, \forall X, Y\in TM$;
    \item $f$ is parallel if $(\nabla_Z^\perp\alpha)(X,Y)=0, \forall X, Y, Z\in TM$;
    \item $f$ is semi-parallel if $(\overline R(X,Y)\cdot\alpha)(Z,W)=0, \forall X, Y, Z, W\in TM$;
    \item $f$ is pseudo-parallel if there exist a smooth function $\psi$ on $M^n$ such that \begin{equation}\label{pseudo}
(\overline R(X,Y)\cdot\alpha)(Z,W)=\psi((X\wedge Y)\cdot\alpha)(Z,W), 
\end{equation}
for all $X, Y, Z, W\in TM$,
\end{enumerate}
where 
\begin{align*}
(\overline R(X,Y)\cdot\alpha)(Z,W):=&(\overline\nabla_X\overline\nabla_Y\alpha)(Z,W)-(\overline\nabla_Y\overline\nabla_X\alpha)(Z,W),\\
((X\wedge Y)\cdot\alpha)(Z,W):=&-\alpha((X\wedge Y)Z,W)-\alpha(Z,(X\wedge Y)W),\\
(X\wedge Y)Z:=&\langle Y,Z\rangle X-\langle X,Z\rangle Y.
\end{align*}

 Totally geodesic, parallel and semi-parallel immersion imply that the immersion is pseudo-parallel. However, the converse is generally not true, as demonstrated in  \cite[Example 4.6]{YHLT}. 
 
 We now observe that \eqref{pseudo} is equivalent to
\begin{align}\label{Rperp}\nonumber
    R^\perp(X,Y)(\alpha(Z,W))=&\alpha(R(X,Y)Z,W)+\alpha(Z,R(X,Y)W)\\ \nonumber
    &-\psi\Big(\langle Y,Z\rangle\alpha(X,W)+\langle X,Z\rangle\alpha(Y,W)\Big)\\
    &-\psi\Big(\langle Y,W\rangle\alpha(X,Z)+\langle X,W\rangle\alpha(Y,Z)\Big).
\end{align}

With these concepts in mind we can prove Theorem \ref{c1}, as follows. 

\begin{proof}[Proof of Theorem ~{\upshape\ref{c1}}]

    Let $f:M^n\rightarrow\varepsilon I\times_a\mathbb Q^{n+m}_s(c)$ be a $\psi$-pseudo-parallel spacelike immersion. First of all, we will verify the following identity:  
 \begin{align}\label{pp}\nonumber
  \sum_\beta B\Big(n||A_\beta T||^2\!-\!2\langle A_\beta T,T\rangle trA_\beta\!+\!||T||^2trA_\beta^2\Big)+\sum_{\beta}(\psi+b)&(n trA_\beta^2-(tr A_\beta)^2)\\ 
-\sum_{\beta,\gamma}\varepsilon_\gamma\Big((trA_\gamma)tr(A_\beta^2A_\gamma)+tr([A_\beta,A_\gamma]^2)-(tr(A_\beta A_\gamma))^2\Big)&=0,
 \end{align}
where $B=\frac{a''}{a}-\varepsilon b$.
 In fact, using the notation of section \ref{proof}, we have that the Laplacian of the squared norm of the second fundamental form $\alpha$ satisfies
\begin{align*}
    \dfrac{1}{2}\Delta|\alpha|^2&=|\nabla^\perp\alpha|^2+\sum_\beta\sum_{i,j,k}h^\beta_{ij}h^\beta_{ijkk}.
 \end{align*}
Moreover, 
\begin{align*}
    h^\beta_{ijkk}=&h^\beta_{kjik}+e_k(B)(\langle T,e_i\rangle\delta_{kj}-\langle T,e_k\rangle\delta_{ij})\langle\xi,e_\beta\rangle\\
&+B\frac{a'}{a}(\delta_{ki}\delta_{kj}-\delta_{kk}\delta_{ij})\langle\xi,e_\beta\rangle+2B\frac{a'}{a}\varepsilon\Big(\langle T,e_k\rangle^2\delta_{ij}-\langle T,e_i\rangle\langle T,e_k\rangle\delta_{kj}\Big)\Big)\langle\xi,e_\beta\rangle\\
    &+B(\langle A_{\xi}e_k,e_i\rangle\delta_{kj}-\langle A_\xi e_k,e_k\rangle\delta_{ij})\langle\xi,e_\beta\rangle+B(\langle T,e_k\rangle\delta_{ij}-\langle T,e_i\rangle\delta_{kj})\langle\alpha(e_k,T),e_\beta\rangle.
\end{align*}
 Equation \eqref{pseudo} is equivalent to 
\begin{align*}
h^\beta_{ijkl}=h^\beta_{ijlk}-\psi(\delta_{ki}h^\beta_{lj}-\delta_{li}h^\beta_{kj}+\delta_{kj}h^\beta_{il}-\delta_{lj}h^\beta_{ik}).
\end{align*}
Then,
\begin{align*}
    h^\beta_{ijkk}=&h^\beta_{kjki}-\psi(\delta_{ik}h^\beta_{kj}-\delta_{kk}h^\beta_{ij}+\delta_{ij}h^\beta_{kk}-\delta_{kj}h^\beta_{ik})+e_k(B)(\langle T,e_i\rangle\delta_{kj}-\langle T,e_k\rangle\delta_{ij})\langle\xi,e_\beta\rangle\\
&+B\frac{a'}{a}(\delta_{ki}\delta_{kj}-\delta_{kk}\delta_{ij})\langle\xi,e_\beta\rangle+2B\frac{a'}{a}\varepsilon\Big(\langle T,e_k\rangle^2\delta_{ij}-\langle T,e_i\rangle\langle T,e_k\rangle\delta_{kj}\Big)\Big)\langle\xi,e_\beta\rangle\\
    &+B(\langle A_{\xi}e_k,e_i\rangle\delta_{kj}-\langle A_\xi e_k,e_k\rangle\delta_{ij})\langle\xi,e_\beta\rangle+B(\langle T,e_k\rangle\delta_{ij}-\langle T,e_i\rangle\delta_{kj})\langle\alpha(e_k,T),e_\beta\rangle.
\end{align*}
Using that $h^\beta_{kjki}=h^\beta_{jkki}$ and Codazzi equation, we have
\begin{align*}
    h^\beta_{jkki}=&h^\beta_{kkji}+e_i(B)(\langle T,e_j\rangle\delta_{kk}-\langle T,e_k\rangle\delta_{jk})\langle\xi,e_\beta\rangle\\
    &+B\frac{a'}{a}(\delta_{ij}\delta_{kk}-\delta_{ik}\delta_{jk})\langle\xi,e_\beta\rangle+2B\frac{a'}{a}\varepsilon\Big(\langle T,e_k\rangle\langle T,e_i\rangle\delta_{jk}-\langle T,e_i\rangle\langle T,e_j\rangle\delta_{kk}\Big)\langle\xi,e_\beta\rangle\\
    &+B\Big(\langle A_\xi e_i,e_j\rangle\delta_{kk}-\langle A_\xi e_i,e_k\rangle\delta_{jk}\Big)\langle\xi,e_\beta\rangle+B\Big(\langle T,e_k\rangle\delta_{jk}-\langle T,e_j\rangle\delta_{kk}\Big)\langle \alpha(e_i,T),e_\beta\rangle,
\end{align*}
which implies that

\begin{align*}
    h^\beta_{ijkk}=&h^\beta_{kkji}-\psi(\delta_{ik}h^\beta_{kj}-\delta_{kk}h^\beta_{ij}+\delta_{ij}h^\beta_{kk}-\delta_{kj}h^\beta_{ik})\\
    &+e_k(B)\Big(\langle T,e_i\rangle\delta_{kj}-\langle T,e_k\rangle\delta_{ij}\Big)\langle\xi,e_\beta\rangle\\
    &+e_i(B)\Big(\langle T,e_j\rangle\delta_{kk}-\langle T,e_k\rangle\delta_{jk}\Big)\langle\xi,e_\beta\rangle\\
    &+2B\frac{a'}{a}\varepsilon\Big(\langle T,e_k\rangle^2\delta_{ij}-\langle T,e_i\rangle\langle T,e_j\rangle\delta_{kk}\Big)\langle\xi,e_\beta\rangle\\
    &+B\Big(\langle A_\xi e_i,e_j\rangle\delta_{kk}-\langle A_\xi e_k,e_k\rangle\delta_{ij}\Big)\langle\xi,e_\beta\rangle\\
    &+B\Big(\langle T,e_k\rangle\delta_{ij}-\langle T,e_i\rangle\delta_{kj}\Big)\langle\alpha(e_k,T),e_\beta\rangle\\
    &+B\Big(\langle T,e_k\rangle\delta_{jk}-\langle T,e_j\rangle\delta_{kk}\Big)\langle \alpha(e_i,T),e_\beta\rangle.
\end{align*}
This way, 
\begin{align*}
    \sum_{\beta}\sum_{i,j,k}h_{ij}^\beta h_{ijkk}^\beta=& \sum_{\beta}\sum_{i,j,k}h_{ij}^\beta h_{kkji}^\beta+\psi(n trA_\beta^2-(tr A_\beta)^2)\\
    &+\sum_{\beta}\Big\langle\nabla \Big(\frac{a''}{a}-\varepsilon b\Big),(n A_\beta-(tr A_\beta)I)(T)\Big\rangle\langle\xi,e_\beta\rangle\\
    &-b'\sum_{\beta}\Big\langle(n A_\beta-(tr A_\beta)I)(T), T\Big\rangle\langle\xi,e_\beta\rangle\\
    &+\Big(\frac{a''}{a}-\varepsilon b\Big)\sum_{\beta}\Big(tr (A_\beta A_\xi)-(trA_\beta)(trA_\xi)\Big)\langle\xi,e_\beta\rangle\\
    &+\Big(\frac{a''}{a}-\varepsilon b\Big)\sum_{\beta}\Big(\langle A_\beta T,T\rangle trA_\beta-n||A_\beta T||^2\Big)
\end{align*}
and so
\begin{align*}
    \frac{1}{2}\Delta|\alpha|^2=&|\nabla^\perp\alpha|^2+\sum_{\beta}\sum_{i,j,k}h_{ij}^\beta h_{kkji}^\beta+\psi(n trA_\beta^2-(tr A_\beta)^2)\\
    &+\sum_{\beta}\Big\langle\nabla \Big(\frac{a''}{a}-\varepsilon b\Big),(n A_\beta-(tr A_\beta)I)(T)\Big\rangle\langle\xi,e_\beta\rangle\\
    &-b'\sum_{\beta}\Big\langle(n A_\beta-(tr A_\beta)I)(T), T\Big\rangle\langle\xi,e_\beta\rangle\\
    &+\Big(\frac{a''}{a}-\varepsilon b\Big)\sum_{\beta}\Big(tr (A_\beta A_\xi)-(trA_\beta)(trA_\xi)\Big)\langle\xi,e_\beta\rangle\\
    &+\Big(\frac{a''}{a}-\varepsilon b\Big)\sum_{\beta}\Big(\langle A_\beta T,T\rangle trA_\beta-n||A_\beta T||^2\Big).
\end{align*}
The last identity together with Theorem \ref{T1} prove \eqref{pp}.

Now, we assume that $f$ is extremal. It follows from \eqref{pp} that
\begin{align*}
nB\sum_\beta||A_{\beta}T||^2+\Big(B||T||^2+n(\psi+b)\Big)\sum_\beta trA_{\beta}^2
+\sum_{\beta,\gamma}\varepsilon_\gamma\Big(|[A_\beta,A_\gamma]|^2+(tr(A_\beta A_\gamma))^2\Big)=0,
\end{align*}
where we use that $[A_{\beta},A_{\gamma}]^{\textbf{t}}=-[A_{\beta},A_{\gamma}]$, $\textbf{t}$ indicates the transpose of linear operator.

We note that, for $s=0$ and $\varepsilon=1$, we get $\varepsilon_{n+1}=\varepsilon_{n+2}=\dots=\varepsilon_{n+m+1}=1$. Therefore,  $\alpha=0$ if  
\begin{align*}
 B=\Big(\frac{a''}{a}-\frac{(a')^2-\varepsilon c}{a^2}\Big)\geq0
\end{align*} 
 and 
 \begin{align*}
 \Big(\Big(\frac{a''}{a}-\frac{(a')^2-\varepsilon c}{a^2}\Big)||T||^2+n\Big(\psi+\frac{\varepsilon(a')^2-c}{a^2}\Big)\Big)\geq0.
\end{align*}
Analogously, for $s=0$ and $\varepsilon=-1$ or $0<s=m+\frac{1+\varepsilon}{2}$ with $\varepsilon=\pm 1$, the result follows with the inequalities reversed, since in this case $\varepsilon_{n+1}=\varepsilon_{n+2}=\dots=\varepsilon_{n+m+1}=-1$.
\end{proof}
\begin{remark}{\upshape{
When $A_\beta T=0$, for all $\beta$, in particular for $T=0$, the condition regarding the function $\frac{a''}{a}-\frac{(a')^2}{a^2}+\frac{\varepsilon c}{a^2}$ is not required to obtain the result of Theorem \ref{c1}.}}   
\end{remark}

The next three examples show that the assumptions of Theorem \ref{c1} are indeed necessary.
\begin{eexample}[Riemannian case: $s=0$ and $\varepsilon=1$]\label{ex1}\upshape{
     Let $\varphi:\mathbb S^3\Big(\frac{1}{3}\Big)\rightarrow\mathbb S^4(1)$ be the isometric immersion defined by
        \begin{align*}
            \varphi(x,y,z)=\Big(\dfrac{1}{\sqrt{3}}xy,\dfrac{1}{\sqrt{3}}xz,\dfrac{1}{\sqrt{3}}yz,\dfrac{1}{2\sqrt{3}}(x^2-y^2),\dfrac{1}{6}(x^2+y^2-2z^2)\Big).
        \end{align*}
        The immersion $\varphi$ is called {\textit{Veronese surface}}. One can check that $\varphi$ is extremal, locally parallel ($\psi=0$) and non-totally geodesic, see \cite{Chern,saka}. Accordingly to \cite{YHLT}, we consider $f=i\circ\varphi:\mathbb S^3\Big(\frac{1}{3}\Big)\rightarrow\mathbb R\times\mathbb S^4(1)$, where $i:\mathbb S^4(1)\rightarrow\mathbb R\times\mathbb S^4(1)$ is a totally geodesic inclusion and so $f$ is pseudo-parallel and non-totally geodesic. Moreover, $f$ satisfies $\frac{a''}{a}-\frac{(a')^2}{a^2}+\dfrac{1}{a^2}>0$ and  \begin{align*}
    -\frac{1}{n}\Big(\Big(\frac{a''}{a}-\frac{(a')^2}{a^2}+\frac{c}{a^2}\Big)||T||^2+n\Big(\frac{(a')^2- c}{a^2}\Big)\Big)=1-\dfrac{||T||^2}{3}>0=\psi.\end{align*}}  
\end{eexample}
Example \ref{ex1} was studied in \cite{YHLT} to prove that for $n>3$ there are examples of pseudo-parallel immersions with non flat normal bundle.  
\begin{eexample}[Case: $s=0$ and $\varepsilon=-1$ (Robertson-Walker space)]\upshape{ Let $\mathbb H_1^{n+1}(-1)$ be the spacetime obtained into Minkowski space $\mathbb R_2^{n+2}$ as the hyperquadric $\mathbb H^{n+1}_1(-1)=\{x\in \mathbb R_2^{n+2};\langle x,x\rangle=-1\}$. This space has constant curvature $-1$ and it is called anti-de Sitter space. Moreover, $\mathbb H_1^{n+1}(-1)$ can be described as the Lorentzian warped product $(-\frac{\pi}{2},\frac{\pi}{2})\times_{\cos t}\mathbb H^n(-1)$ whose the warped product metric is given by $\langle,\rangle=-dt^2+\cos^2(t)g_{\mathbb H^n}$, where $\mathbb H^n(-1)$ denotes the hyperbolic space of constant curvature $-1$. For more detail see \cite{montiel}. We consider $f:\mathbb H^{m}\Big(-\frac{n}{m}\Big)\times\mathbb H^{n-m}\Big(-\frac{n}{n-m}\Big)\rightarrow \mathbb H_{1}^{n+1}(-1), 1\leq m\leq n-1,$ the complete extremal isometric immersion described in \cite{TI}. Furthermore, it was still proved that the norm square of the second fundamental form of $f$ is $n$. Then, from Corollary \ref{FH} we have that $f$ is parallel immersion, since
\begin{align*}
    0= \dfrac{1}{2}\Delta|\alpha|^2=|\nabla^\perp\alpha|^2-(-1)(-n^2)+(-1)(-n^2).
\end{align*}
We notice that $\dfrac{a''}{a}-\dfrac{(a')^2}{a^2}+\dfrac{\varepsilon c}{a^2}=0$ and 
\begin{align*}
    -\frac{1}{n}\Big(\Big(\frac{a''}{a}-\frac{(a')^2}{a^2}+\frac{\varepsilon c}{a^2}\Big)||T||^2+n\Big(\frac{\varepsilon(a')^2- c}{a^2}\Big)\Big)=-1<0=\psi.\end{align*} }   
\end{eexample}

We will now demonstrate the following Lemma to be applied in the last example.
\begin{lemma}\label{cylinder}
    Let $f:M^n\rightarrow\mathbb Q_s^{n+m}(c)$ be a $\psi$-pseudo-parallel spacelike immersion with vanishing mean curvature vector. Then, $F:I\times M^n\rightarrow I\times\mathbb Q_s^{n+m}(c)$ defined by $F(s,x)=(s,f(x))$ is a $\psi$-pseudo parallel spacelike immersion which has mean curvature vanishes identically. 
  \end{lemma}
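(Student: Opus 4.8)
The plan is to exploit the product structure of both the domain $I \times M^n$ and the target $I \times \mathbb{Q}_s^{n+m}(c)$, together with the fact that a slice $\{s_0\} \times \mathbb{Q}_s^{n+m}(c)$ is totally geodesic in $I \times \mathbb{Q}_s^{n+m}(c)$ (this is immediate from the curvature identity \eqref{Rbarra} with $a \equiv 1$, or equivalently can be read off from \eqref{id1}–\eqref{id2}). First I would set up notation: write $\tilde{M} = I \times M^n$, let $\partial_s$ denote the unit vector field tangent to the new $I$-factor, and observe that $dF(\partial_s) = \partial_s$ is a globally parallel unit vector field on $I \times \mathbb{Q}_s^{n+m}(c)$ which is tangent to $F(\tilde{M})$. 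Because $\partial_s$ is parallel in the ambient space and tangent to $\tilde{M}$, the second fundamental form $\tilde{\alpha}$ of $F$ satisfies $\tilde{\alpha}(\partial_s, \cdot) = 0$; moreover, for vectors tangent to the $M$-slice, $\tilde{\alpha}$ coincides with the second fundamental form $\alpha$ of $f$ (since the ambient connection splits as a product connection along a slice, and the normal spaces of $F$ and of $f$ are naturally identified). This gives at once that the mean curvature vector of $F$ is $\frac{1}{n+1}\operatorname{tr}\tilde{\alpha} = \frac{1}{n+1}\operatorname{tr}\alpha = \frac{n}{n+1}\vec{H} = 0$, and that $F$ is spacelike since $I \times M$ carries the product of $ds^2$ (or $\varepsilon\, ds^2$ with $\varepsilon = 1$ here, as the $I$-factor is Riemannian) with the positive-definite metric induced by $f$.

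Next I would verify the $\psi$-pseudo-parallel condition for $F$. Using the equivalent formulation \eqref{Rperp}, I must check that
\begin{align*}
\tilde{R}^\perp(X,Y)\tilde{\alpha}(Z,W) = \tilde{\alpha}(\tilde{R}(X,Y)Z,W) + \tilde{\alpha}(Z,\tilde{R}(X,Y)W) - \psi\big((X\wedge Y)\cdot\tilde{\alpha}\big)(Z,W)
\end{align*}
for all $X,Y,Z,W \in T\tilde{M}$, where $\psi$ is pulled back to $\tilde{M}$ via the projection onto $M$ (it is constant in the $s$-direction). The key structural facts are: the intrinsic curvature $\tilde{R}$ of $I \times M$ vanishes whenever one of its arguments is $\partial_s$ and otherwise equals the curvature $R$ of $M$; the normal curvature $\tilde{R}^\perp$ of $F$ equals $R^\perp$ of $f$ under the identification of normal bundles; and $(X \wedge Y)Z$ vanishes or reduces to the corresponding operator on $M$ according to whether $\partial_s$ appears. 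Splitting each of $X,Y,Z,W$ into its $\partial_s$-component and its $TM$-component and using multilinearity, every term in which a $\partial_s$ appears is killed on both sides (because $\tilde{\alpha}$, $\tilde{R}$, and the relevant wedge operator all annihilate $\partial_s$), so the identity for $F$ reduces term-by-term to the identity \eqref{Rperp} for $f$, which holds by hypothesis. Hence $F$ is $\psi$-pseudo-parallel.

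The main obstacle — really the only point requiring care — is the bookkeeping that identifies all the geometric quantities of $F$ along the slice with those of $f$: namely $\tilde{\alpha} = \alpha$ on $TM$-directions and $\tilde{\alpha}(\partial_s,\cdot)=0$, $\tilde{R}^\perp = R^\perp$, and the vanishing of the relevant intrinsic curvatures and wedge operators when $\partial_s$ is involved. I expect to establish these cleanly by noting that the ambient metric on $I \times \mathbb{Q}_s^{n+m}(c)$ is a genuine (unwarped) product, so the Levi-Civita connection, curvature tensor, and the Gauss/Weingarten operators all decompose along the product, and the submanifold $F(\tilde{M}) = I \times f(M)$ is itself a product of the trivial immersion of $I$ with $f$. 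Once this decomposition is recorded, the conclusion follows without further computation; I would present it compactly rather than re-deriving the structure equations from scratch.
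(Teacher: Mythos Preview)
Your proposal is correct and follows essentially the same route as the paper: both exploit the unwarped product structure to show that $\partial_s$ is parallel in the ambient, hence $\tilde\alpha(\partial_s,\cdot)=0$ and $\tilde\alpha|_{TM\times TM}=\alpha_f$, from which the vanishing of the mean curvature and the reduction of the pseudo-parallel condition to that of $f$ follow by splitting the arguments into $\partial_s$- and $TM$-components. The only cosmetic difference is that the paper checks the pseudo-parallel identity directly in the form $(\overline R(X,Y)\cdot\alpha)=\psi((X\wedge Y)\cdot\alpha)$ using $\overline\nabla_T X=\overline\nabla_X T=0$, whereas you work through the equivalent formulation \eqref{Rperp} involving $\tilde R$ and $\tilde R^\perp$; both verifications are immediate once the product decomposition of $\tilde\alpha$ is in hand.
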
  \begin{proof}
    Of course, $F_{\ast}(\frac{\partial}{\partial s})=\frac{\partial}{\partial t}=T$ and $F_{\ast}(X)=f_\ast(X)$ for each $X\in TM$. Then, the normal vector fields to immersion $F$ are the normal vector fields to immersion $f$. Moreover, 
\begin{align*}
    \alpha_F(X,Y)=\alpha_f(X,Y), \forall X, Y\in TM, 
\end{align*}
where $\alpha_F$ and $\alpha_f$ denote the second fundamental forms of $F$ and $f$, respectively. As $\overline\nabla_TT=0$, in particular, $\alpha_F(T,T)=0$, it follows that $\Vec{H}_F=\frac{n}{n+1}\Vec{H}_f=0$, where $\Vec{H}_F$ and $\Vec{H}_f$ denote the mean curvature vector of $F$ and $f$, respectively. With these identity in mind and noting that $\overline\nabla_TX=\overline\nabla_XT=0, \forall X\in TM$, we can conclude that 
\begin{align*}
    (\overline R(X,Y)\cdot\alpha)(Z,W)=&\psi((X\wedge Y)\cdot\alpha)(Z,W),\\
    (\overline R(X,T)\cdot\alpha)(Z,W)=&\psi((X\wedge T)\cdot\alpha)(Z,W)=0,\\
    (\overline R(X,Y)\cdot\alpha)(T,W)=&\psi((X\wedge Y)\cdot\alpha)(T,W)=0,
\end{align*}
for each $X,Y,Z,W\in TM$. Therefore, we obtain the desired results.
\end{proof}

\begin{eexample}[Case: $0<s=m+\frac{1+\varepsilon}{2}$]\upshape{ 
Let $\varphi:\mathbb H^2(-\frac{1}{3})\rightarrow\mathbb H^4_2(-1)$ be the spacelike isometric immersion defined by 
\begin{align*}
    \varphi(u,v)=&\Big(\frac{\sqrt{3}}{2}\sinh\Big(\frac{2v}{\sqrt{3}}\Big)\sin\Big(\frac{u}{\sqrt{3}}\Big),\frac{\sqrt{3}}{2}\sinh\Big(\frac{2v}{\sqrt{3}}\Big)\cos\Big(\frac{u}{\sqrt{3}}\Big),\frac{\sqrt{3}}{2}\sinh^2\Big(\frac{v}{\sqrt{3}}\Big)\sin\Big(\frac{2u}{\sqrt{3}}\Big),\\
    &\frac{\sqrt{3}}{2}\sinh^2\Big(\frac{v}{\sqrt{3}}\Big)\cos\Big(\frac{2u}{\sqrt{3}}\Big),\frac{1}{2}\Big(3\cosh^2\Big(\frac{v}{\sqrt{3}}\Big)-1\Big)\Big),
\end{align*}
  where $\mathbb H^2\Big(-\frac{1}{3}\Big)$ is the Riemannian hyperbolic space of constant curvature $-\frac{1}{3}$ and $\mathbb H^4_2(-1)$ is the semi-Riemannian hyperbolic space of constant curvature $-1$ with index $2$. The immersion $\varphi$ is called the \textit{hyperbolic Veronese surface}. One can show easily that $\varphi$ is extremal, parallel ($\psi=0$) and non-totally geodesic, see \cite{Se}. From Lemma \ref{cylinder}, the function $f:\mathbb R\times\mathbb H^2\Big(-\frac{1}{3}\Big)\rightarrow \mathbb R\times\mathbb H_2^4(-1)$ defined by $f(s,u,v)=(s,\varphi(u,v))$ is an extremal spacelike immersion which is parallel and non-totally geodesic, since $\alpha_f=\alpha_\varphi$. Now, we note that
 \[\frac{a''}{a}-\frac{(a')^2}{a^2}+\frac{\varepsilon c}{a^2}=-1<0\] and
   \begin{align*}
   -\frac{1}{n}\Big(\Big(\frac{a''}{a}-\frac{(a')^2}{a^2}+\frac{\varepsilon c}{a^2}\Big)||T||^2+n\Big(\frac{\varepsilon(a')^2- c}{a^2}\Big)\Big)=&-\frac{1}{3}(3-||T||^2)=-\frac{2}{3}<0=\psi,\end{align*} where we used that $||T||^2=\langle\partial_t,\partial_t\rangle=1$.
    }
\end{eexample}
The following corollary is an immediate consequence of Theorem \ref{c1}. 
\begin{corollary}\label{deSitter}
    Let $f:M^n\rightarrow\mathbb S_1^{n+1}(\kappa)$ be a semi-parallel spacelike immersion. If $f$ is extremal, then $f$ is totally geodesic.
\end{corollary}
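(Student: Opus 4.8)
The plan is to derive Corollary~\ref{deSitter} directly from Theorem~\ref{c1} by identifying the de Sitter spacetime $\mathbb S_1^{n+1}(\kappa)$, $\kappa>0$, as a semi-Riemannian warped product of the form $\varepsilon I\times_a\mathbb Q_s^{n+1-1}(c)$ with the correct parameters. Concretely, $\mathbb S_1^{n+1}(\kappa)$ can be written as the Lorentzian warped product $-I\times_{a}\mathbb Q^{n}(c)$ (the Robertson–Walker form), so that in the notation of Theorem~\ref{c1} we are in the regime $s=0$, $\varepsilon=-1$, with codimension $m=1$; equivalently one may use the model with $s=1=m+\tfrac{1+\varepsilon}{2}$. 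The hypothesis ``semi-parallel'' means $\psi\equiv 0$, and ``extremal'' means $\vec H\equiv 0$, hence $\vec H(p)=0$ at every $p\in M$.

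First I would record the relevant identity coming from the constant curvature condition \eqref{constant}: since $\mathbb S_1^{n+1}(\kappa)$ has constant curvature $\kappa$, we have $\tfrac{a''}{a}=\tfrac{(a')^2}{a^2}-\tfrac{\varepsilon c}{a^2}=-\varepsilon\kappa$, so that $\tfrac{a''}{a}-\tfrac{(a')^2}{a^2}+\tfrac{\varepsilon c}{a^2}=0$ and $\tfrac{\varepsilon(a')^2-c}{a^2}=\varepsilon\kappa\cdot(-1)\cdot(-1)$... more carefully, $b=\tfrac{\varepsilon(a')^2-c}{a^2}=\varepsilon\bigl(\tfrac{(a')^2}{a^2}-\tfrac{\varepsilon c}{a^2}\bigr)=\varepsilon(-\varepsilon\kappa+?)$; the upshot is that $\tfrac{\varepsilon(a')^2-c}{a^2}=\kappa>0$. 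Then the quantity controlling the sign in part~(2) of Theorem~\ref{c1} becomes
\begin{align*}
-\frac{1}{n}\Bigl(\Bigl(\tfrac{a''}{a}-\tfrac{(a')^2}{a^2}+\tfrac{\varepsilon c}{a^2}\Bigr)\|T\|^2+n\Bigl(\tfrac{\varepsilon(a')^2-c}{a^2}\Bigr)\Bigr)=-\frac{1}{n}\bigl(0\cdot\|T\|^2+n\kappa\bigr)=-\kappa<0=\psi
\end{align*}
is violated in the sense needed, so I would instead invoke the branch of the argument where $A_\beta T=0$: since $\tfrac{a''}{a}-\tfrac{(a')^2}{a^2}+\tfrac{\varepsilon c}{a^2}=0$, the Remark following the proof of Theorem~\ref{c1} says the sign condition on that function is not needed, and one only needs $\psi+b\ge 0$ with the appropriate sign of $\varepsilon_\gamma$; here $\psi+b=0+\kappa=\kappa>0$, and since $s=0,\varepsilon=-1$ gives $\varepsilon_\gamma=-1$, the reversed inequality $\psi(p)\le -\kappa$ fails, but the extremal-case algebraic identity from the proof still forces $\sum_\beta \mathrm{tr}A_\beta^2=0$ term by term once the ambient curvature terms vanish.

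Actually the cleanest route is to go through Corollary~\ref{FH} rather than Theorem~\ref{c1} directly: for a spacelike hypersurface in a space of constant curvature $\kappa$ one has $\tfrac12\Delta|\alpha|^2=|\nabla^\perp\alpha|^2+n\kappa(S-nH^2)+\delta(nH\,\mathrm{tr}A^3-S^2)$, and for an extremal hypersurface ($H=0$) this reduces to $\tfrac12\Delta|\alpha|^2=|\nabla^\perp\alpha|^2+n\kappa S-\delta S^2$ with $\delta=\langle\nu,\nu\rangle$. For a spacelike hypersurface of a Lorentzian manifold the unit normal is timelike, so $\delta=-1$, giving $\tfrac12\Delta|\alpha|^2=|\nabla^\perp\alpha|^2+n\kappa S+S^2$. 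Then I would combine this with the semi-parallel hypothesis: semi-parallel forces $R^\perp(X,Y)\alpha(Z,W)=\alpha(R(X,Y)Z,W)+\alpha(Z,R(X,Y)W)$, and plugging this into the pointwise computation of $\sum h_{ij}h_{ijkk}$ (exactly the computation in the proof of Theorem~\ref{c1} with $\psi=0$ and $B=0$, the latter because of constant curvature) yields a pointwise identity of the form $n\kappa S+S^2=0$ (after using $[A,A]$ antisymmetry and codimension one so that the commutator/trace terms all vanish or combine). Since $\kappa>0$ and $S=|\alpha|^2\ge 0$, this forces $S\equiv 0$, i.e. $\alpha\equiv 0$, so $f$ is totally geodesic.

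The main obstacle I anticipate is bookkeeping the sign of $\delta$ and the index $\varepsilon_\gamma$ correctly, i.e. making sure that the timelike character of the normal in the Lorentzian (de Sitter) setting enters with the sign that makes $n\kappa S$ and $S^2$ add rather than cancel; this is precisely where the Lorentzian signature is doing real work (the analogous Riemannian statement would need $\kappa\le 0$). A secondary technical point is confirming that the hypothesis ``semi-parallel'' is strong enough pointwise: unlike the global arguments of Corollary~\ref{co3} (which integrate $\Delta|\alpha|^2$ over a compact manifold), here we get a pointwise algebraic consequence because semi-parallelism lets us replace the curvature-of-normal-bundle terms exactly, turning the Simons identity into an equality of nonnegative quantities with no Laplacian left over — no compactness or completeness is needed, matching the local flavour of Theorem~\ref{c1}.
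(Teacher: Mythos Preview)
Your proposal contains a sign error that sends you on an unnecessary detour. From the constant-curvature condition \eqref{constant} with $\varepsilon=-1$ you have $\tfrac{(a')^2}{a^2}-\tfrac{\varepsilon c}{a^2}=-\varepsilon\kappa=\kappa$, hence
\[
b=\frac{\varepsilon(a')^2-c}{a^2}=\varepsilon\Bigl(\frac{(a')^2}{a^2}-\frac{\varepsilon c}{a^2}\Bigr)=(-1)\cdot\kappa=-\kappa,
\]
not $+\kappa$ as you wrote. With the correct sign the threshold in Theorem~\ref{c1}(2) becomes
\[
-\frac{1}{n}\Bigl(0\cdot\|T\|^2+n(-\kappa)\Bigr)=\kappa>0=\psi,
\]
so the hypothesis $\psi(p)\le\kappa$ is satisfied (and $\tfrac{a''}{a}-\tfrac{(a')^2}{a^2}+\tfrac{\varepsilon c}{a^2}=0\le 0$), and Theorem~\ref{c1}(2) applies directly at every point. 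This is exactly the paper's proof: it uses the Robertson--Walker model $-\mathbb R\times_{\cosh(\sqrt\kappa t)/\sqrt\kappa}\mathbb S^n(1)$, verifies these two inequalities, and invokes Theorem~\ref{c1}.

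Your fallback route via Corollary~\ref{FH} and the pointwise identity \eqref{pp} does reach the correct equation $n\kappa S+S^2=0$ (in codimension one with $B=0$, $\psi=0$, $H=0$ and $\delta=\varepsilon_{n+1}=-1$, identity \eqref{pp} collapses to $nbS+\delta S^2=0$, i.e.\ $-n\kappa S-S^2=0$), so the argument is salvageable. But this is just re-deriving the special case of Theorem~\ref{c1} by hand rather than citing it. The sign obstacle you anticipated was real; it simply bit you one step earlier than you thought, in the computation of $b$, and once that is corrected no fallback is needed.
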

\begin{proof}
  Let us consider $-\mathbb R\times_{\frac{\cosh(\sqrt{\kappa} t)}{\sqrt{\kappa}}}\mathbb S^n(1)$ as one of the models the Lorentz warped product of the de Sitter space $\mathbb S_1^{n+1}(\kappa)$, where $\mathbb S^n$ denotes the unit sphere. Then, \[\dfrac{a''}{a}-\dfrac{(a')^2}{a^2}+\dfrac{\varepsilon c}{a^2}=\dfrac{\sqrt{k}\cosh(\sqrt{k}t)}{\frac{\cosh(\sqrt{k}t)}{\sqrt{k}}}-\dfrac{\sinh^2(\sqrt{k}t)}{\frac{\cosh^2(\sqrt{k}t)}{k}}-\dfrac{1}{\frac{\cosh^2(\sqrt{k}t)}{k}}=0,\] and 
    \begin{align*}  
    -\frac{1}{n}\Big(\Big(\frac{a''}{a}-\frac{(a')^2}{a^2}+\frac{\varepsilon c}{a^2}\Big)||T||^2+n\Big(\frac{\varepsilon(a')^2- c}{a^2}\Big)\Big)= \Big(\frac{\sinh^2({\sqrt{\kappa t}})+ 1}{\frac{\cosh^2({\sqrt{\kappa t}})}{\kappa}}\Big)=\kappa>0=\psi.
    \end{align*}
    It follows from Theorem \ref{c1} that $f$ is totally geodesic.
\end{proof}

Another consequence of Theorem \ref{c1} is related to a result concerning the nonexistence of extremal semi-parallel spacelike hypersurfaces in the Einstein-de Sitter spacetime. Here, we denote $\mathcal{E}^{n+1}$ as the $n+1$-dimensional Einstein-de Sitter spacetime, characterized as the Lorentzian warped product $(0,\infty)\times_{t^{\frac{1}{3}}}\mathbb R^n$ equipped with the warped metric $-dt^2+t^{\frac{2}{3}}g_{\mathbb R^n}$, where $g_{\mathbb R^n}$ denotes the Euclidean metric of constant sectional curvature $0$. This spacetime is
a particular example of a Robertson-Walker spacetime with non-constant curvature which represents a classical model for an expanding universe. 

To establish the next result we reference the following result proved by Aledo et. al \cite[Corollary 14]{Aledo}:
``\textit{There are no totally geodesic spacelike hypersurfaces in the Einstein-de Sitter spacetime}''.

\begin{corollary}\label{Einstein}
    There are no extremal semi-parallel spacelike hypersurfaces in the Einstein-de Sitter spacetime $\mathcal{E}^{n+1}$.
\end{corollary}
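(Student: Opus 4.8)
The plan is to combine Corollary \ref{deSitter}'s proof strategy with the Einstein-de Sitter warped product model and the cited non-existence theorem of Aledo et al. First I would write $\mathcal{E}^{n+1}=(0,\infty)\times_{t^{1/3}}\mathbb{R}^n$ with $\varepsilon=-1$, $c=0$, and $a(t)=t^{1/3}$, so that $a'=\tfrac{1}{3}t^{-2/3}$ and $a''=-\tfrac{2}{9}t^{-5/3}$. A direct computation gives
\begin{align*}
\frac{a''}{a}-\frac{(a')^2}{a^2}+\frac{\varepsilon c}{a^2}=-\frac{2}{9t^2}-\frac{1}{9t^2}=-\frac{1}{3t^2}<0,
\end{align*}
so the sign hypothesis in part (2) of Theorem \ref{c1} (the Robertson-Walker case $s=0$, $\varepsilon=-1$) is satisfied. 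Next I would check the $\psi$-inequality: since $f$ is semi-parallel we have $\psi\equiv 0$, and
\begin{align*}
-\frac{1}{n}\Big(\Big(\frac{a''}{a}-\frac{(a')^2}{a^2}+\frac{\varepsilon c}{a^2}\Big)\|T\|^2+n\,\frac{\varepsilon(a')^2-c}{a^2}\Big)
=-\frac{1}{n}\Big(-\frac{\|T\|^2}{3t^2}-\frac{n}{9t^2}\Big)=\frac{\|T\|^2}{3nt^2}+\frac{1}{9t^2}>0,
\end{align*}
so $\psi(p)=0$ is strictly less than this positive quantity at every $p\in M$; in particular the hypothesis of part (2) holds everywhere.

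I would then invoke Theorem \ref{c1}(2): since $\Vec{H}\equiv 0$ (extremality) and the hypotheses hold at every point, every $p\in M$ is a geodesic point, which means $\alpha\equiv 0$, i.e. the hypersurface is totally geodesic. Finally I would cite Aledo et al. \cite[Corollary 14]{Aledo}, which asserts there are no totally geodesic spacelike hypersurfaces in $\mathcal{E}^{n+1}$, obtaining a contradiction. Hence no extremal semi-parallel spacelike hypersurface in $\mathcal{E}^{n+1}$ can exist.

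The argument is essentially a bookkeeping application of the already-established machinery, so there is no serious obstacle — the only point requiring a little care is confirming that the conclusion of Theorem \ref{c1} is "totally geodesic" rather than merely "geodesic at a single point." Since the two sign hypotheses of part (2) hold at \emph{every} $p\in M$ (the warping function inequalities are pointwise identities independent of the choice of $p$, and $\psi\equiv 0$ lies strictly below the relevant threshold at all points), the conclusion "$p$ is a geodesic point" applies for all $p$, yielding $\alpha\equiv 0$ globally. One should also note that $t>0$ throughout $(0,\infty)$, so all the displayed expressions are well-defined and the strict inequalities are genuine. With totally geodesicity in hand, the cited result closes the argument immediately.
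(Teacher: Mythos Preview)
Your proof is correct and follows essentially the same route as the paper: both verify the two sign hypotheses of Theorem~\ref{c1}(2) for the warping function $a(t)=t^{1/3}$ (with $\varepsilon=-1$, $c=0$), conclude that an extremal semi-parallel spacelike hypersurface would be totally geodesic, and then invoke Aledo et al.\ \cite[Corollary~14]{Aledo} for the contradiction. Your version simply spells out the intermediate derivatives $a'$, $a''$ and the pointwise global validity of the inequalities a bit more explicitly.
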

\begin{proof}
    Let us suppose that there exists $M^n$, an extremal semi-parallel spacelike hypersurface in $\mathcal{E}^{n+1}$. Then, we see that
    \begin{align*}
      \frac{a''}{a}-\frac{(a')^2}{a^2}+\frac{\varepsilon c}{a^2}=-\dfrac{1}{3}t^{-2}<0,
  \end{align*}
  and
  \begin{align*}
    -\frac{1}{n}\Big(\Big(\frac{a''}{a}-\frac{(a')^2}{a^2}+\frac{\varepsilon c}{a^2}\Big)||T||^2+n\Big(\frac{\varepsilon(a')^2- c}{a^2}\Big)\Big)=\dfrac{t^{-2}}{3n}\Big(||T||^2+\frac{n}{3}\Big)>0=\psi.   
  \end{align*}
  Since $f$ is extremal, it follows from Theorem \ref{c1} that $f$ is totally geodesic, which is not possible for spacelike hypersurface in $\mathcal{E}^{n+1}$, as shown by \cite{Aledo}.
\end{proof}

To prove the last corollary of this paper we require the following Lemma.
\begin{lemma}\label{R}
     Let $f:M^n\rightarrow \varepsilon I\times_a\mathbb Q_s^{n+m}(c)$ be a $\psi$-pseudo-parallel spacelike immersion. Then, $R^\perp(X,Y)\Vec{H}=0$ for each $X,Y\in TM$.
\end{lemma}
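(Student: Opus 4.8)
The plan is to derive the identity by \emph{contracting} the pseudo-parallel condition \eqref{pseudo} over a pair of arguments of $\alpha$. Fix $p\in M$ and choose, as in Section~\ref{proof}, a local orthonormal frame $\{e_1,\dots,e_n\}$ for $TM$ around $p$ (available since the induced metric is positive definite). I would set $Z=W=e_k$ in \eqref{pseudo} and sum over $k=1,\dots,n$. Since $R^\perp(X,Y)$ is a pointwise endomorphism of $TM^\perp$, it acts linearly, so, using $\sum_k\alpha(e_k,e_k)=n\Vec{H}$ together with the definition of $(\overline R(X,Y)\cdot\alpha)$, the contracted left-hand side is
\begin{align*}
\sum_{k}(\overline R(X,Y)\cdot\alpha)(e_k,e_k)=n\,R^\perp(X,Y)\Vec{H}-2\sum_k\alpha\big(R(X,Y)e_k,e_k\big).
\end{align*}

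The key point is that the remaining curvature sum here, and the entire right-hand side of \eqref{pseudo}, both vanish upon contraction, and each vanishing is a purely algebraic consequence of the symmetry $\alpha(U,V)=\alpha(V,U)$ together with the skew-symmetry of the operators involved; no Gauss/Codazzi/Ricci equation or warped-product structure is needed. First, writing $R(X,Y)e_k=\sum_l\langle R(X,Y)e_k,e_l\rangle e_l$ and using that $R(X,Y)$ is skew-adjoint on $T_pM$,
\begin{align*}
\sum_k\alpha\big(R(X,Y)e_k,e_k\big)=\sum_{k,l}\langle R(X,Y)e_k,e_l\rangle\,\alpha(e_l,e_k)=0,
\end{align*}
since $\langle R(X,Y)e_k,e_l\rangle$ is antisymmetric and $\alpha(e_l,e_k)$ symmetric in $(k,l)$. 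Second, from $(X\wedge Y)e_k=\langle Y,e_k\rangle X-\langle X,e_k\rangle Y$,
\begin{align*}
\sum_k\big((X\wedge Y)\cdot\alpha\big)(e_k,e_k)=-2\sum_k\big(\langle Y,e_k\rangle\alpha(X,e_k)-\langle X,e_k\rangle\alpha(Y,e_k)\big)=-2\big(\alpha(X,Y)-\alpha(Y,X)\big)=0.
\end{align*}
Hence the contracted identity collapses to $n\,R^\perp(X,Y)\Vec{H}=\psi\cdot 0$, i.e. $R^\perp(X,Y)\Vec{H}=0$ for all $X,Y\in TM$, as claimed.

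I do not foresee a genuine obstacle: the argument is elementary and tensorial, and it is exactly the standard ``trace kills the curvature terms'' device. The only point to watch is sign bookkeeping, so I would contract the compact form \eqref{pseudo} directly, through the definitions of $(\overline R(X,Y)\cdot\alpha)$ and $(X\wedge Y)\cdot\alpha$, rather than the expanded form \eqref{Rperp}; this way the terms proportional to $\psi$ are manifestly trace-free and no cancellation can be overlooked.
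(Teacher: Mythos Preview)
Your argument is correct and follows the same contraction strategy as the paper, but your execution is a touch more direct. The paper pairs $R^\perp(X,Y)\Vec{H}$ with an arbitrary normal vector $\eta$, picks an orthonormal basis $\{X_i\}$ of $T_pM$ that \emph{diagonalizes} $A_\eta$ (possible because the induced metric is positive definite), applies \eqref{Rperp}, and then observes $\langle\alpha(R(X,Y)X_i,X_i),\eta\rangle=\lambda_i\langle X_i,R(X,Y)X_i\rangle=0$; the vanishing of the $\psi$-terms upon tracing is left implicit. You instead trace \eqref{pseudo} against an arbitrary orthonormal frame and kill the curvature sum $\sum_k\alpha(R(X,Y)e_k,e_k)$ and the wedge sum by the generic ``antisymmetric paired with symmetric'' argument, without diagonalizing anything. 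This buys you a cleaner, basis-independent proof that also makes the disappearance of the $\psi$-contribution explicit; the paper's version, by contrast, leans on the spacelike hypothesis through the diagonalizability of $A_\eta$.
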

\begin{proof}
    Our proof is analogous to \cite{AS} and we will do it here for completeness. Let $\eta\in TM^\perp$ and $\{X_1,\ldots,X_n\}$ be an orthonormal basis in $T_xM$ for each $x\in M$ such that $A_\eta X_i=\lambda_iX_i, 1\leq i\leq n$. Then, 
    \begin{align*}
        \langle R^\perp(X,Y)\Vec{H},\eta\rangle=&\frac{1}{n}\sum_{i}\langle R^\perp(X,Y)\alpha(X_i,X_i),\eta\rangle\\
        =&\frac{2}{n}\sum_{i}\langle \alpha(R(X,Y)X_i,X_i),\eta\rangle\\
        =&\frac{2}{n}\sum_{i}\langle A_\eta(X_i),R(X,Y)X_i\rangle=\frac{2}{n}\sum_{i}\lambda_i\langle X_i,R(X,Y)X_i\rangle=0,
    \end{align*}
    where we use the identity \eqref{Rperp}.
\end{proof}

\begin{corollary}\label{cof}
    Let $f:M^n\rightarrow \varepsilon I\times_a\mathbb Q_s^{n+1}(c)$ be a $\psi$-pseudo-parallel spacelike immersion. If either the assumptions of Theorem \ref{c1} hold or $\Vec H(p)\neq0$ for some $p\in M$, then $f$ has flat normal bundle.
\end{corollary}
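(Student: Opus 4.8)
The immersion has codimension $(n+1)+1-n=2$, so the normal bundle $TM^{\perp}$ has rank $2$ and carries a nondegenerate fibre metric, and "flat normal bundle'' means $R^{\perp}\equiv 0$. The starting observation is that for each $X,Y\in TM$ the operator $R^{\perp}(X,Y)$ is a skew-adjoint endomorphism of the $2$-dimensional nondegenerate fibre $T_{p}M^{\perp}$, and such an endomorphism is either zero or invertible, since its determinant vanishes precisely when the endomorphism itself is zero; equivalently, it always has even rank. Moreover, by the Ricci equation $R^{\perp}(X,Y)\eta=\alpha(A_{\eta}Y,X)-\alpha(A_{\eta}X,Y)$ lies in the first normal space $N^{1}_{p}=\mathrm{span}\{\alpha(v,w):v,w\in T_{p}M\}$. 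So the plan is to verify $R^{\perp}(p)=0$ at every $p\in M$.

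Assume first that the hypotheses of Theorem \ref{c1} hold. Fix $p\in M$. If $\Vec{H}(p)\neq 0$, then Lemma \ref{R} gives $R^{\perp}(X,Y)\Vec{H}=0$ for all $X,Y$, so the skew-adjoint operator $R^{\perp}(X,Y)$ has a nonzero vector in its kernel at $p$, hence is singular, hence vanishes at $p$ by the remark above. If instead $\Vec{H}(p)=0$, then all the hypotheses of Theorem \ref{c1} are in force at $p$, so $p$ is a geodesic point, i.e. $\alpha(p)=0$, and $R^{\perp}(p)=0$ trivially. In either case $R^{\perp}(p)=0$, and since $p$ was arbitrary the normal bundle is flat.

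Now assume instead that $\Vec{H}(p_{0})\neq 0$ for some $p_{0}\in M$. The Lemma \ref{R} argument of the previous paragraph still shows $R^{\perp}\equiv 0$ on the nonempty open set $V=\{\Vec{H}\neq 0\}$, and it remains to control $M\setminus V=\{\Vec{H}=0\}$. The plan is to argue by contradiction: if $R^{\perp}\not\equiv 0$, then $\Omega_{0}=\{R^{\perp}\neq 0\}$ is a nonempty open subset of $\{\Vec{H}=0\}$ on which, since $R^{\perp}(X,Y)$ has even rank and takes values in $N^{1}$, the first normal space is $2$-dimensional; one then shows that $\Omega_{0}$ is also closed, so that $\Omega_{0}=M$ (as $M$ is connected) and hence $\Vec{H}\equiv 0$, contradicting $\Vec{H}(p_{0})\neq 0$. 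The main obstacle is precisely this closedness: at a point $q\in\partial\Omega_{0}$ one has simultaneously $R^{\perp}(q)=0$ and $\Vec{H}(q)=0$, and excluding such a configuration requires feeding the pseudo-parallel identity \eqref{Rperp}, together with the Gauss and Codazzi equations of $f$ near $q$, into the local structure analysis of codimension-two pseudo-parallel immersions carried out by Asperti, Lobos and Mercuri \cite{AS}. This boundary/rigidity step is the only part that is not immediate from Lemma \ref{R} and Theorem \ref{c1}.
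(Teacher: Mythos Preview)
Your treatment of the first alternative (when the hypotheses of Theorem~\ref{c1} hold) is correct and is precisely what the paper's terse ``there is nothing to prove'' unpacks to: at points with $\Vec{H}=0$ one invokes Theorem~\ref{c1} to get $\alpha=0$, and at points with $\Vec{H}\neq 0$ one invokes Lemma~\ref{R} together with the fact that a skew-adjoint endomorphism of a $2$-dimensional nondegenerate space with nontrivial kernel must vanish. This part matches the paper exactly.

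For the second alternative your argument on the set $V=\{\Vec{H}\neq 0\}$ is again exactly the paper's argument. However, the paper's own proof \emph{stops there}: it takes the point $p$ with $\Vec{H}(p)\neq 0$, applies Lemma~\ref{R} and the skew-symmetry observation, concludes $R^{\perp}(X,Y)=0$ at $p$, and ends. It does not discuss points in $\{\Vec{H}=0\}$ at all. Your observation that this only yields $R^{\perp}\equiv 0$ on $V$ and leaves $M\setminus V$ unaddressed is accurate; the paper simply does not carry out the additional step you are attempting.

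The closedness argument you sketch is therefore not part of the paper's proof, and as you yourself acknowledge, it is not completed: you do not actually show that $\Omega_{0}=\{R^{\perp}\neq 0\}$ is closed, and the appeal to the structure analysis in \cite{AS} is only a plan, not an argument. So in Case~2 your proposal reproduces the paper's proof on $V$ and then adds an unfinished attempt to close a gap that the paper itself leaves open (or, more charitably, that disappears if the hypothesis is read as ``$\Vec{H}(p)\neq 0$ for all $p$'' or the conclusion as a pointwise statement).
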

\begin{proof} If the hypothesis from Theorem \ref{c1} are satisfied, there is nothing to prove. So, we suppose that
    $\Vec{H}(p)\neq0$ for some $p\in M$. Then, Lemma \ref{R} implies that $\Vec{H}(p)$ belongs to Kernel of Linear operator $R^\perp(X,Y):T_pM^{\perp}\rightarrow T_pM^\perp$. Since $R^\perp(X,Y)$ is skew-symmetric and $T_pM^\perp$ has dimension $2$, it follows that $R^\perp(X,Y)=0,\forall X, Y\in T_pM$. 
\end{proof}

As a final highlight of this article, we would like to pose the following questions that arise naturally from the consequences of Theorem 2. 
These questions represent significant aspects in the investigation of the classification of pseudo-parallel spacelike submanifolds in $\varepsilon I\times_a\mathbb Q_s^{n+m}(c)$.
\begin{itemize}
\item [ 1.] Is it possible to find semi-parallel spacelike hypersurfaces with non-zero mean curvature in the Einstein-de Sitter spacetime? More generally, can pseudo-parallel spacelike hypersurfaces with non-zero mean curvature exist in the Einstein-de Sitter spacetime?
\item [2.]In their work \cite{LT}, Lobos and Tojeiro provided a complete local classification of pseudo-parallel submanifolds with flat normal bundles in space forms. Taking into account of this article and Corollary \ref{cof}, one can ask:
Is it possible to formulate a similar, albeit more general, result pertaining to the local classification of pseudo-parallel submanifolds with flat normal bundles in $\varepsilon I\times_a\mathbb Q^{n+m}_s(c)$?
\end{itemize}






\end{document}